\newtheorem{theorem}{Theorem}[section]
\newtheorem{lemma}[theorem]{Lemma}
\newtheorem{remark}[theorem]{Remark}
\numberwithin{equation}{section}
\newenvironment{proof}[1][Proof]{\noindent\textit{#1.} }{\hfill \rule{0.5em}{0.5em}}
\begin{document}

\title{Exponential Euler method for stiff SDEs driven by fractional Brownian motion}
\date{\today}

\author
{Haozhe Chen, Zhaotong Shen, Qian Yu
\thanks{ School of Mathematics, Nanjing University of Aeronautics and Astronautics, Nanjing 211106, China
Corresponding author: qyumath@163.com}}

\maketitle

\begin{abstract}
\noindent In a recent paper by Kamrani \emph{et al.} \cite{Kam24}, exponential Euler method for stiff stochastic differential equations with additive fractional Brownian noise was discussed, and the convergence order close to the Hurst parameter $H$ was proved. Utilizing the technique of Malliavin derivative, we prove the exponential Euler scheme and obtain a convergence order of one, which is the optimal rate given by Kamrani \emph{et al.} \cite{Kam24} in numerical simulation.
\vskip.2cm \noindent {\bf Keywords:} Exponential Euler method; Stiff SDEs; Fractional Brownian motion; Malliavin derivative.

\vskip.2cm \noindent {\it Subject Classification: 65L04; 60H10; 60G22.}
\end{abstract}

\section{Introduction}\label{sec1}
In this paper, we consider the numerical approximation of a system of $m$-dimensional SDEs with a linear stiff term and driven by fractional Brownian motion (fBm)
\begin{equation}\label{sec1-eq.1}
\text{d}U_t=(AU_t+f(U_t))\text{d}t+\sum_{i=1}^Mb_i(t)\text{d}B_i^{H}(t), U_{t_0}=u_0\in\mathbb{R}^m, \qquad t\in[t_0, T],
\end{equation}
where $B^{H}_i, i=1,2,\cdots,M$ are independent $m$-dimensional fBm with Hurst parameter $H\in (1/2,1)$. We assume that the function $f: \mathbb{R}^m\to\mathbb{R}^m$ satisfies Lipschitz condition and the first three derivatives of $f$ satisfy the polynomial growth condition, $A\in \mathbb{R}^{m,m}$ and $AU_t$ is stiff:
$$
|A|(T-t_0)\gg 1, ~~\mu[A](T-t_0)\ll |A|(T-t_0),
$$
where $|\cdot|$ denotes the Euclidean norm, $\mu(\cdot)$ is the corresponding logarithmic matrix norm (see in \cite{Dek84}, \cite{Str12}).
The equation \eqref{sec1-eq.1} is considered as a pathwise Riemann-Stieltjes integral equation and the existence of a unique stationary and attracting solution with Hurst parameter $H>1/2$ follows from \cite{Mas04}.

Recently, Kamrani \emph{et al.} \cite{Kam24} studied the exponential Euler scheme to the SDE \eqref{sec1-eq.1} and proved the convergence order close to the Hurst parameter $H$.
\begin{theorem}\label{sec1-thm.1}\cite{Kam24}
Assume $\mathbb{E}|u_0|^2<\infty, \mathbb{E}|Au_0|^2<\infty$, function $f$ satisfies the Lipschitz condition and linear growth condition. Then, under some additional assumptions of $A$, for every $\varepsilon>0$, there exists a constant $C>0$ such that
$$
\sup_{k=0,1,\cdots,N}\sqrt{\mathbb{E}|U_{t_k}-V_{k}|^2}\leq C \left(\max_k\{t_{k+1}-t_k\}\right)^{H-\varepsilon},
$$
for all $N\geq 2$, where $U_{t_k}$ is the solution of SDE \eqref{sec1-eq.1} at time $t_k$, $V_{k}$ is the numerical solution
$$
V_{k+1}=e^{A(t_{k+1}-t_k)}V_k+A^{-1}(e^{A(t_{k+1}-t_k)}-Id)f(V_k)+\sum_{i=1}^M\int_{t_k}^{t_{k+1}}e^{A(t_{k+1}-s)}b_i(s)\text{d}B_i^{H}(s),
$$
for all $k=0,1,\cdots, N$ and $Id$ is the identity matrix.
\end{theorem}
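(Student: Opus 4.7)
The plan is to follow the classical one-step local-error-plus-stability analysis, adapted to the exponential integrator and to the low regularity of the fBm noise. The starting point is the variation-of-constants representation of the exact solution on each subinterval $[t_k,t_{k+1}]$,
$$U_{t_{k+1}} = e^{A(t_{k+1}-t_k)}U_{t_k} + \int_{t_k}^{t_{k+1}} e^{A(t_{k+1}-s)}f(U_s)\,\mathrm{d}s + \sum_{i=1}^M\int_{t_k}^{t_{k+1}} e^{A(t_{k+1}-s)}b_i(s)\,\mathrm{d}B_i^H(s).$$
Subtracting the scheme $V_{k+1}$ and using the identity $A^{-1}(e^{Ah}-Id)=\int_0^{h}e^{A(h-r)}\,\mathrm{d}r$, the stochastic-convolution terms cancel exactly and one is left with the error recursion
$$E_{k+1} = e^{A(t_{k+1}-t_k)}E_k + \int_{t_k}^{t_{k+1}} e^{A(t_{k+1}-s)}\bigl(f(U_s)-f(V_k)\bigr)\,\mathrm{d}s,\qquad E_k := U_{t_k}-V_k.$$

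Next I would split $f(U_s)-f(V_k) = \bigl(f(U_s)-f(U_{t_k})\bigr) + \bigl(f(U_{t_k})-f(V_k)\bigr)$. The second piece is controlled by $\mathrm{Lip}(f)\,|E_k|$ and will be absorbed by stability; the first piece is the genuine local truncation contribution, controlled by the path regularity of $U$. The crucial regularity estimate is
$$\sqrt{\mathbb{E}\,|U_s-U_{t_k}|^2}\leq C(s-t_k)^{H},$$
which I would obtain by rewriting $U_s-U_{t_k}$ via the SDE itself: the drift integral contributes $O(s-t_k)$, while each Wiener-type integral $\int_{t_k}^{s}b_i(r)\,\mathrm{d}B_i^{H}(r)$ has second moment of order $(s-t_k)^{2H}$ by the standard isometry for deterministic integrands against fBm with $H>1/2$. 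The moment hypotheses on $u_0$ and $Au_0$ are exactly what is needed to start a uniform-in-$t$ $L^2$-moment bound on $U_t$.

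Iterating the recursion, squaring, applying Cauchy--Schwarz on the time integral, and invoking discrete Gronwall would then give $\sup_k\sqrt{\mathbb{E}|E_k|^2}\leq Ch^{H}$, with the $\varepsilon$-loss arising from the almost-sure $(H-\varepsilon)$-Hölder exponent of fBm whenever a pathwise bound is used to close the argument. The main obstacle I expect is the stiff regime: the naive estimate $|e^{Ah}|\leq e^{|A|h}$ would destroy the rate since $|A|(T-t_0)\gg 1$. The remedy is to bound $|e^{A h}|\leq e^{\mu[A]h}$ through the logarithmic matrix norm, so that the assumption $\mu[A](T-t_0)\ll |A|(T-t_0)$ keeps the stability constant bounded independently of $|A|$; carrying this control through the sums of fBm-integral variances, while simultaneously handling the commutator-like interaction between $A$ and $f(U_s)$ inside the convolution, is the most delicate part of the argument and explains why the analysis is performed in $L^2$ rather than pathwise.
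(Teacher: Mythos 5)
Your proposal is a correct route to the stated $H-\varepsilon$ bound and is essentially the argument of \cite{Kam24} itself: variation of constants, exact cancellation of the stochastic convolutions, a one-step truncation error driven by $\sqrt{\mathbb{E}|U_s-U_{t_k}|^2}\le C(s-t_k)^{H-\varepsilon}$, and stability through $|e^{Ah}|\le e^{\mu[A]h}$ plus discrete Gronwall. Two remarks. First, the $\varepsilon$-loss does not come from the pathwise H\"older exponent of fBm --- the noise increment is handled in $L^2$ by the isometry and contributes exactly order $(s-t_k)^{2H}$ to the variance, with no loss; it comes from the stiff drift interaction term $\int_{t_0}^{t_k}(e^{A(s-r)}-e^{A(t_k-r)})f(U_r)\,dr$, which must be bounded via $|Ae^{A(t_k-r)}|\le L/(t_k-r)$ and produces a logarithmic (hence $\varepsilon$) defect; relatedly, $\mathbb{E}|Au_0|^2<\infty$ enters precisely in the increment piece $(e^{A(s-t_k)}-Id)e^{A(t_k-t_0)}u_0$, not in the uniform moment bound on $U_t$, and if you take ``rewriting $U_s-U_{t_k}$ via the SDE itself'' literally (drift $AU_r+f(U_r)$), the term $\int_{t_k}^sAU_r\,dr$ reintroduces $|A|$ and must be avoided in favour of the mild form. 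Second, this is a genuinely different route from what the paper actually does in Section 3 (nominally the proof of this theorem, but in fact establishing the sharper rate-one Theorem \ref{sec1-thm.2}): there the error is unrolled into the weighted sum $Z_t=\sum_j M_j^tR_j^t$ of \eqref{sec3-eq.Z}, the cross terms $\mathbb{E}[I_{j,j'}]$ are estimated individually, and the fBm contributions $\Lambda^4_{t_j,s},\Lambda^5_{t_j,s}$ are treated not by Cauchy--Schwarz (which caps each factor at order $h^H$) but by the Malliavin-calculus estimate of Lemma \ref{sec2-lem-3}, $|\mathbb{E}[F\int_s^tg\,dB^H]|\le C|t-s|$, applied with $F=M_j^tf_1(j,s)\cdot M_{j'}^tf_1(j',s')$ after controlling two Malliavin derivatives of that weight. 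That weak-type estimate exploits cancellation invisible to the triangle-inequality-in-$L^2$ scheme and is exactly what upgrades the order from $H$ to one; your approach cannot reach rate one because it commits to $\|U_s-U_{t_k}\|_{L^2}\sim(s-t_k)^H$ before any expectation of a product is taken, but it is perfectly adequate for the $H-\varepsilon$ statement as posed.
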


Note that, the choice of $A$ has significant influence on the long time integration error. In the situation that there is no noise, exponential integrators have been proven to be a very interesting class of numerical time integration methods (see in \cite{Hoc10}), as they handle the stiff part exactly and can thus be stable even when being explicit. Therefore, it is meaningful for Kamrani \emph{et al.} \cite{Kam24} to consider fBm as a noise study exponential Euler scheme.
Considering the construction conditions of matrix $A$, it is necessary to avoid the existence of $|A|$ in any error bound in practical proofs.

If $A=0$, Garrido-Atienza \cite{Gar09} proved that under a one-sided dissipative condition and any Hurst parameter $H\in(0,1)$ the drift-implicit Euler method have a unique stationary solution.
The numerical approximation for fractional SDE with $A=0$ has received much attention in Hu \emph{et al.}\cite{Hu16}, Hong \emph{et al.}\cite{Ho20}, Kloeden \emph{et al. }\cite{Klo11}, Neuenkirch\cite{Ne06}, Zhang and Yuan \cite{ZY21} and references therein.
Recently, Zhou \emph{et al.}\cite{Zhou23} have considered the backward Euler method and obtained the optimal convergence rate. This provides us with the basis for studying the optimal convergence rate of exponential Euler scheme for stiff type SDE when A is not equal to zero.

In this paper, we focus on exponential Euler method for stiff SDEs and prove the optimal convergence rate under the following assumptions on functions $f:  \mathbb{R}^m\to \mathbb{R}^m$, $b_i: [t_0, T]\to\mathbb{R}^m$ and the matrix $A$:
\begin{enumerate}
  \item[(A1)] There exists a constant $\kappa$, such that for any $x,y\in\mathbb{R}^m$,
$$|f(x)|\leq \kappa(1+|x|)$$
and
$$|f(x)-f(y)|\leq \kappa|x-y|,$$
where $|\cdot|$ denotes the Euclidean norm.

  \item[(A2)] There are constants $\kappa$ and $\nu$, such that for any $x\in\mathbb{R}^m$,
$$\max_{i=1,2,3}|\partial^i f(x)|\leq (1+|x|^{\nu}),$$
where $|\partial^i f(x)|, i=1,2,3$ are defined as follows,
 $$|\partial f(x)|=\Big(\sum_{j=1}^m|\frac{\partial f}{\partial x_j}(x)|^2\Big)^{1/2},$$

$$|\partial^2 f(x)|=\Big(\sum_{j,k=1}^m|\frac{\partial^2 f}{\partial x_j\partial x_k}(x)|^2\Big)^{1/2},$$
and
$$|\partial^3 f(x)|=\Big(\sum_{j,k,\ell=1}^m|\frac{\partial^3 f}{\partial x_j\partial x_k\partial x_{\ell}}(x)|^2\Big)^{1/2}.$$

  \item[(A3)] The functions $b_i: [t_0, T]\to\mathbb{R}^m, i=1,2,\ldots,n$ are bounded with respect to the Euclidean norm, i.e. there exists a constant $C$, for any $t\in[t_0, T]$,
  $$|b_i(t)|\leq C, ~~i=1,2,\ldots,M.$$

\item[(A4)] There  exists a constant $L$ such that $A\in\mathbb{R}^{m,m}$ for all $s<t$ satisfies the following conditions
$$|Ae^{A(t-s)}|\leq \frac{L}{t-s}, ~~|A^{-1}(Id-e^{A(t-s)})|\leq L(t-s).$$
Furthermore, $\sup_{t_0\leq t\leq T}e^{\mu[A](t-t_0)}\leq C$ with a constant $C$ not too large, where $\mu[A]$ denotes the logarithmic matrix norm for any square matrix $A$ defined by
$$\mu[A]=\lambda_{max}\Big(\frac{A+A^T}{2}\Big).$$
\end{enumerate}
Note that, by \cite{Dek84}, the logarithmic matrix norm satisfies $$|\mu[A]|\leq |A| ~~\text{and} ~~|e^{A t}|\leq e^{\mu[A]t},$$ for all $t>0$. In applications, we often let $\mu[A]\leq0$, which implies that
\begin{equation}\label{sec1-eq-muA}
\sup_{t_0\leq t\leq T}e^{\mu[A](t-t_0)}\leq 1.
\end{equation}

\begin{remark}
Assumption (A1) here is a common assumption, in order to ensure the existence and uniqueness of SDE \eqref{sec1-eq.1}.
Compared to \cite{Kam24}, assumption (A2) is a new addition to this paper, which aims to use the Malliavin derivative to ensure the validity of Lemma \ref{sec3-lem.3}, which plays a crucial role in the proof of the main result.
Assumption (A4) may not appear intuitive enough, but it is fulfilled for finite dimensional case (see in \cite{Lor14}).
\end{remark}

Let $\pi: t_0<t_1<\cdots<t_N=T$ be a partition of the time interval $[t_0, T]$. We denote by non-equidistant sizes $h_i=t_{i+1}-t_i, 0\leq i\leq N-1$, and denote $h_{max}=\max_{0\leq i\leq N-1}h_i$.
The  classical Euler scheme applied to \eqref{sec1-eq.1} is
\begin{equation}\label{sec1-eq-BEE}
V_{k+1}=e^{A(t_{k+1}-t_0)}u_0+\sum_{j=0}^k\int_{t_j}^{t_{j+1}}e^{A(t_{k+1}-s)}f(V_{j})ds+\sum_{i=1}^M\int_{0}^{t_{k+1}}e^{A(t_{k+1}-s)}b_i(s)dB_i^{H}(s),
\end{equation}
where $V_k:=V_{t_k}$, for all $k=0,1,\cdots, N-1$ and $V_0=u_0$.

Next we present the main result of this paper.
\begin{theorem}\label{sec1-thm.2}
Assume $\mathbb{E}|u_0|^2<\infty, \mathbb{E}|Au_0|^2<\infty$, the functions $f, b_i, i=1,2,\ldots,M$, and matrix $A$ satisfy the assumptions (A1)--(A4). Then, there exists a constant $C>0$ such that
$$
\sup_{k=0,1,\cdots,N}\sqrt{\mathbb{E}|U_{t_k}-V_{k}|^2}\leq C \max_k\,h_k,
$$
for all $N\geq 2$, where $U_{t_k}$ is the solution of SDE \eqref{sec1-eq.1} at time $t_k$.
\end{theorem}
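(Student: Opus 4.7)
The plan is to begin from the mild (variation-of-constants) representation of \eqref{sec1-eq.1},
\begin{equation*}
U_{t_k}=e^{A(t_k-t_0)}u_0+\sum_{j=0}^{k-1}\int_{t_j}^{t_{j+1}} e^{A(t_k-s)}f(U_s)\,ds+\sum_{i=1}^{M}\int_{t_0}^{t_k}e^{A(t_k-s)}b_i(s)\,dB_i^{H}(s),
\end{equation*}
and to subtract the scheme \eqref{sec1-eq-BEE}. The stochastic convolutions cancel exactly, so the error $e_k:=U_{t_k}-V_k$ collapses to a pure drift expression
\begin{equation*}
e_k=\sum_{j=0}^{k-1}\int_{t_j}^{t_{j+1}}e^{A(t_k-s)}\bigl[f(U_s)-f(V_j)\bigr]\,ds.
\end{equation*}
Splitting $f(U_s)-f(V_j)=[f(U_s)-f(U_{t_j})]+[f(U_{t_j})-f(V_j)]$, the Lipschitz piece from (A1) produces a term of the form $\kappa\sum_j h_j\,\|e_j\|_{L^2}$ which, together with the bound $\sup_{t}e^{\mu[A](t-t_0)}\leq C$ from (A4), feeds into a discrete Gronwall argument. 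Everything therefore reduces to bounding the consistency term $f(U_s)-f(U_{t_j})$ in $L^2$ by a full power of $h_{\max}$.

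For the consistency piece I would apply the Taylor identity
\begin{equation*}
f(U_s)-f(U_{t_j})=\int_0^1\partial f\bigl(U_{t_j}+\theta(U_s-U_{t_j})\bigr)\,d\theta\cdot(U_s-U_{t_j}),
\end{equation*}
and insert the mild decomposition $U_s-U_{t_j}=(e^{A(s-t_j)}-Id)U_{t_j}+\int_{t_j}^s e^{A(s-r)}f(U_r)\,dr+\sum_i Y_i(t_j,s)$, with $Y_i(t_j,s):=\int_{t_j}^s e^{A(s-r)}b_i(r)\,dB_i^{H}(r)$. Writing the semigroup part as $A^{-1}(e^{A(s-t_j)}-Id)\cdot AU_{t_j}$ and using (A4) bounds the first two pieces by $O(s-t_j)$ \emph{without any $|A|$ prefactor}; combined with $\mathbb{E}|Au_0|^2<\infty$, a standard moment bound on $U$, and assumption (A1), this contribution is already $O(h_{\max})$ in $L^2$.

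The genuine difficulty, and the reason \cite{Kam24} obtained only order $H-\varepsilon$, is the fBm term. Pathwise one has $|Y_i(t_j,s)|\sim(s-t_j)^{H}$, so a direct Cauchy--Schwarz estimate delivers at best $h_{\max}^{H}$. I would instead expand the quadratic form $\mathbb{E}|R_k|^{2}$, where $R_k$ collects all $Y_i$-dependent consistency terms, and apply the Malliavin duality $\mathbb{E}[F\,\delta(\phi)]=\mathbb{E}\langle DF,\phi\rangle_{\mathcal{H}}$ with the $H>1/2$ inner product $\langle \varphi,\psi\rangle_{\mathcal{H}}=H(2H-1)\iint \varphi(u)\psi(v)|u-v|^{2H-2}\,du\,dv$. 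Because $b_i$ and $e^{A\cdot}$ are deterministic, $Y_i(t_j,s)$ is a Wiener integral; the Malliavin derivative transfers onto the random factor $\partial f(U_{t_j}+\theta(U_s-U_{t_j}))$, and the chain rule produces $\partial^2 f(U_r)\cdot DU_r$ (and, in second-order expansions, $\partial^3 f\cdot DU\cdot DU$), whose $L^p$-integrability is precisely what assumption (A2) on the first three derivatives of $f$ is designed to guarantee. This integration-by-parts replaces the pathwise $(s-t_j)^{H}$ by a full $(s-t_j)$ in mean square, which is the content of Lemma 3.3.

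Combining the $L^2$ consistency bound $\sqrt{\mathbb{E}|R_k|^{2}}\leq C h_{\max}$ with the Lipschitz/Gronwall contribution and iterating yields the claimed rate. The principal obstacle I anticipate is controlling the Malliavin derivatives $DU_r$ (and, implicitly, $D^2 U_r$) \emph{uniformly in $|A|$}: they satisfy linear equations with $A$ in the drift, and the whole point of the improvement over \cite{Kam24} is that their $L^p$ norms must depend only on the logarithmic-norm bound $\sup_{t}e^{\mu[A](t-t_0)}\leq C$ rather than on $|A|$ itself. This is exactly where the precise structure of (A4), in particular the estimate $|Ae^{A(t-s)}|\leq L/(t-s)$ coupled with $|A^{-1}(Id-e^{A(t-s)})|\leq L(t-s)$, is indispensable; once this uniform Malliavin bound is in hand, assembling the pieces by discrete Gronwall delivers the theorem.
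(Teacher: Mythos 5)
Your proposal is correct in substance and its crucial step coincides with the paper's: both identify the stochastic consistency term $\int_{t_j}^{s}e^{A(s-r)}b(r)\,dB_r^{H}$ as the obstruction to order one, expand the mean square of the accumulated consistency error as a double sum over $j,j'$, and apply Malliavin duality (the paper's Lemmas \ref{sec2-lem-3-0}--\ref{sec2-lem-3}, borrowed from \cite{Zhou23}) so that each cross term is bounded by $C|s-t_j||s'-t_{j'}|+C\langle \mathbf{1}_{[t_j,s]},\mathbf{1}_{[t_{j'},s']}\rangle_{\mathfrak{H}}$ rather than the pathwise $|s-t_j|^{H}|s'-t_{j'}|^{H}$; assumption (A2) enters exactly as you predict, through $\partial^2 f\cdot DU$ and $\partial^3 f\cdot DU\otimes DU$ in the Malliavin derivatives of the Taylor weights. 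Where you genuinely diverge is the propagation of the error: you split off $f(U_{t_j})-f(V_j)$ and close with a discrete Gronwall inequality on $\|e_k\|_{L^2}$, whereas the paper solves the error recursion explicitly, writing $Z_t=\sum_j M_j^tR_j^t$ with $M_j^t=\prod_{\ell>j}Q_\ell^t$ and then placing the random weights $M_j^tf_1(j,s)$ \emph{inside} the expectation against the fBm increments. The paper's route therefore requires uniform Malliavin bounds on the numerical solution $V$ and on the weights $M_j^t$ (Lemmas \ref{sec3-lem.3}--\ref{sec3-lem.4} and Step 2), which your Gronwall route avoids entirely since your consistency term involves only $U$; this is a real simplification, at the cost of losing the explicit representation \eqref{sec3-eq.Z} that the authors reuse in Section 4 to discuss the asymptotic error distribution. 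One minor imprecision: your claim that the semigroup/drift pieces of $U_s-U_{t_j}$ are $O(s-t_j)$ ``without any $|A|$ prefactor'' is slightly optimistic --- the term $\int_{t_0}^{t_j}(e^{A(s-r)}-e^{A(t_j-r)})f(U_r)\,dr$ forces the singular bound $|Ae^{A(t_j-r)}|\le L/(t_j-r)$, whose integral diverges logarithmically, so one only gets $|s-t_j|^{1-\varepsilon}$ (as the paper states, citing \cite{Kam24}); this does not change the final rate but should be acknowledged in the bookkeeping.
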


\begin{remark}
The order of the convergence rate in the above theorem is one. We have verified in numerical experiment that this is the optimal rate,
which is consistent with the results that Kamrani \emph{et al.} \cite{Kam24} want to improve.
\end{remark}

The paper is organized as follows. In Section 2 we recall the definition of fBm, the techniques of Malliavin calculus and some preliminary lemmas. Section 3 is devoted to proving the main result.
Then in Section 4, we report our numerical experiment and illustrate the accuracy of the method.
Throughout this paper, if not mentioned otherwise, $C$ or $c$ (maybe with subscript) will denote a generic positive finite constant and may change from line to line.

\section{Preliminaries}\label{sec2}
\subsection{Fractional Brownian motion}\label{sec2.1}
In this subsection we will recall the definition of fBm (see in \cite{BHOZ2008}, \cite{Hu17}).
FBm on $\mathbb{R}^d$ with Hurst parameter $H\in(0,1)$ is a $d$-dimensional centered Gaussian process $B^H=\{B_t^H, ~t\geq0\}$  with component processes being independent copies of an one dimensional centered Gaussian process $B^{H,i}, i=1,2,\ldots,d$ and the covariance function given by
$$
\mathbb{E}[B_t^{H,i}B_s^{H,i}]=\frac{1}{2}\left[t^{2H}+s^{2H}-|t-s|^{2H}
\right].
$$
Note that $B_t^{\frac12}$ is a classical standard Brownian motion.  We only consider fBm with Hurst parameter $H>1/2$ in this paper.

FBm $B^H=(B^{H,1},\cdots,B^{H,d})$ admits the following Wiener integral representation
$$B_t^{H,i}=\int_0^tK_H(t,s)dW^i_s, i=1,2,\ldots,d,$$
where $W^i$ is an one dimensional standard Brownian motion and $K_H$ is the kernel function defined by
\begin{align}\label{sec2-eq.KH}
K_H(t,s)=C_Hs^{1/2-H}\int_s^t(u-s)^{H-3/2}u^{H-1/2}du, ~~s\leq t,
\end{align}
with $C_H=\sqrt{\frac{H(2H-1)}{\beta(2-2H,H-1/2)}}$. Note that
$$\frac{\partial K_H}{\partial t}(t,s)=c_H(H-\frac12)(t-s)^{H-3/2}(\frac{s}{t})^{1/2-H}.$$

Young proved that the integral $\int_{a}^{b}fdg$ exists
as a Riemann-Stieltjes integral if $f$ and $g$ have finite $p$-variation and $q$-variation, respectively, with $\frac1p+\frac1q>1$.
Clearly, if $f$ is $\alpha$-H\"older continuous, then it has finite $\frac1\alpha$-variation on
any finite interval. In this case, fBm $B^{H,i}$ has  H\"{o}lder continuous sample paths of exponent of order lesser than $H$,
the fractional stochastic integral
$$\int_0^tu(s)dB_s^{H,i}$$
exists as a pathwise Riemann-Stieltjes integral if $u$ is H\"{o}lder continuous of order greater than $1-H$.

The Cameron-Martin space $\mathcal{K}_H$ associated to the covariance $\mathbb{E}[B_t^{H,i}B_s^{H,i}]$ is defined as the closure of the space of step functions with respect to the scalar product
$$\langle \phi,\psi\rangle_{\mathcal{K}_H}=c_H\int_0^t\int_0^t\phi(u)\psi(v)|u-v|^{2H-2}dsdt.$$

For any $\phi\in \mathcal{K}_H$, we can define a operator $K^{*}$,
\begin{align*}
K^{*}_t(\phi)(s)=\int_s^t\phi(r)\frac{\partial K_H}{\partial r}(r,s)dr.
\end{align*}
Then we have an isometry between $\mathcal{K}_H$ and $L^2([0,t])$,
$$\langle \phi, \psi\rangle_{\mathcal{K}_H}=\langle K^{*}_t(\phi),K^{*}_t(\psi)\rangle_{L^2[0,t]}.$$
Thus, the Wiener integral with respect to fBm $B^{H,i}$ can be rewritten as a Wiener integral with respect to Wiener process $W^i$
\begin{align*}
\int_0^t\phi(s)dB_s^{H,i}=\int_0^tK^{*}_t(\phi)(s)dW^i_s.
\end{align*}

\subsection{Malliavin calculus}\label{sec2.2}
Let $\{G_t, t\in[0,T]\}$ be a zero mean continuous Gaussian process with covariance function $\mathbb{E}[G_tG_s]=R(t,s)$ such that $G_0=0$.
We suppose that $G$ is defined in a complete probability space $(\Omega, \mathcal{F}, \mathbb{P})$ and $\mathcal{F}$ is generated by $G$. Let $\mathcal{H}_1$
be the first Wiener chaos, the closed subspace of $L^2(\Omega)$ generated by $G$. The reproducing kernel Hilbert space $\mathfrak{H}$ is defined as the closure
of the linear span of the indicator functions $\{\mathbf{1}_{[0,t]}, t\in[0,T]\}$ with respect to the scalar product
$\langle \mathbf{1}_{[0,t]}, \mathbf{1}_{[0,s]}\rangle_{\mathfrak{H}}$.
If $G$ is an  one dimensional fBm $B^{H,1}$,
$$\langle \mathbf{1}_{[0,t]}, \mathbf{1}_{[0,s]}\rangle_{\mathfrak{H}}=\mathbb{E}[(B_t^{H,1}-B_s^{H,1})(B_u^{H,1}-B_v^{H,1})].$$
The mapping $\mathbf{1}_{[0,t]}\mapsto G_t$ provides an isometry between $\mathfrak{H}$ and $\mathcal{H}_1$. We denote by $G(\varphi)$ the image in $\mathcal{H}_1$ of an element $\varphi\in\mathfrak{H}$.

For a smooth and cylindrical random variable $F=f(G(\varphi_1),\cdots,G(\varphi_n))$ with $\varphi_i\in\mathfrak{H}$,
$f\in C_b^\infty(\mathbb{R}^n)$ (i.e. $f$ and all of its partial derivatives are bounded), we define its Malliavin derivative as
the $\mathfrak{H}$-valued random variable given by
$$D^GF=\sum_{i=1}^n\frac{\partial f}{\partial x_i}(G(\varphi_1),\cdots,G(\varphi_n)){\varphi_i}.$$

By iteration, one can define the $k$-th derivative $D^{G,k}F$ as an element of $L^2(\Omega,\mathfrak{H}^{\otimes k})$, where $\mathfrak{H}^{\otimes k}$ denote the $k$-th tensor product of the Hilbert space $\mathfrak{H}$. For any natural
number $k$ and any real number $p\geq1$, we define the Sobolev space $\mathbb{D}_G^{k,p}$ as the closure of the space
of smooth and cylindrical random variables with respect to the norm $||\cdot||_{G,k,p}$ defined by
\begin{align}\label{sec2-eq.D-space}
||F||_{G,k,p}^p=\mathbb{E}(|F|^p)+\sum_{i=1}^k\mathbb{E}(||D^{G,i}F||^p_{\mathfrak{H}^{\otimes i}}).
\end{align}

The divergence operator $\delta^G$ is defined as the adjoint of the derivative operator $D^G$ in the following
manner. An element $u\in L^2(\Omega,\mathfrak{H})$ belongs to the domain of $\delta^G$, denoted by Dom $\delta^G$, if there is a
constant $C_u$ depending on $u$ such that
\begin{align}\label{sec2-eq.d-space}
|\mathbb{E}(\langle D^GF,u\rangle_{\mathfrak{H}})|\leq C_u ||F||_{L^2(\Omega)}, \forall F\in\mathbb{D}_G^{1,2}.
\end{align}
If $u\in$ Dom $\delta^G$, then the random variable $\delta^G(u)$ is defined by the duality relationship
$$\mathbb{E}(F\delta^G(u))=\mathbb{E}(\langle D^GF,u\rangle_{\mathfrak{H}}),$$
which holds for any $F\in\mathbb{D}_G^{1,2}$.

If $V$ is a separable Hilbert space, we can define in a similar way the spaces $\mathbb{D}_G^{k,p}(V)$
of $V$-valued random variables. We recall that the space $\mathbb{D}_G^{1,2}(\mathfrak{H})$ of $\mathfrak{H}$-valued random variables is included in the domain of $\delta^G$, and for any element $u\in\mathbb{D}_G^{1,2}(\mathfrak{H})$ we have
$$\mathbb{E}|\delta^G(u)|^2\leq \mathbb{E}\|u\|^2_{\mathfrak{H}}+\mathbb{E}\|D^Gu\|^2_{\mathfrak{H}^{\otimes 2}}.$$

Furthermore, Meyer inequalities imply that for all $p>1$, we have
$$\|\delta^G(u)\|_p\leq c_p\,\|u\|_{\mathbb{D}_G^{1,2}(\mathfrak{H})}.$$
If $u$ is a simple $\mathfrak{H}$-valued random variable of the form
$u=\sum_{i=1}^nF_i\varphi_i,$
where $F_i\in\mathbb{D}_G^{1,2}$ and $\varphi_i\in\mathfrak{H}$. Then $u\in$ Dom $\delta^G$ and
$$\delta^G(u)=\sum_{i=1}^n\left(F_iG(\varphi_i)-\langle D^GF_i,\varphi_i\rangle_{\mathfrak{H}}\right).$$
 For the convenience of writing, in the case of not causing confusion, we will simply write $D^G$, $\delta^G$ and $\mathbb{D}_G^{k,p}$ as $D$, $\delta$ and $\mathbb{D}^{k,p}$ (when $G$ is fBm) in the follow-up of this paper.
The reader is referred to \cite{Alos01,Nu06,Nu18} and references therein for more about Malliavin calculus.

\subsection{Preliminary lemmas}\label{sec2.3}
In this subsection, we provide two lemmas, and the results of these lemmas will be used for the proof in Section 3.
The first lemma about the upper-bound estimate result for the increments of fBm $B^H$ comes from the Proposition 4.2 in \cite{Zhou23}.
\begin{lemma}\label{sec2-lem-3-0}\cite{Zhou23}
Let $F: \Omega\to \mathbb{R}$ be a random variable that possesses the second Malliavin derivative. If $\mathbb{E}(|F|)<\infty$ and $\sup_{t_0\leq s\leq T}\mathbb{E}(D_s^{(i)}F)<\infty$,
then for any $t_0\leq s,t\leq T$
\begin{equation*}
\Big|\mathbb{E}\Big[F(B^{H,i}_t-B^{H,i}_s)\Big]\Big|\leq C|t-s|.
\end{equation*}
Assume further that $\sup_{t_0\leq s\leq T}\mathbb{E}(D_{s,t}^{(i,j)}F)<\infty$, then for any $t_0\leq s<t\leq T$ and $t_0\leq v<u\leq T$
\begin{equation*}
\Big|\mathbb{E}\Big[F(B^{H,i}_t-B^{H,i}_s)(B^{H,i}_u-B^{H,i}_v)\Big]\Big|\leq C|t-s||u-v|+C\langle \mathbf{1}_{[s,t]}, \mathbf{1}_{[v,u]}\rangle_{\mathfrak{H}},
\end{equation*}
where $\langle \mathbf{1}_{[s,t]}, \mathbf{1}_{[v,u]}\rangle_{\mathfrak{H}}=\mathbb{E}[(B_t^{H,i}-B_s^{H,i})(B_u^{H,i}-B_v^{H,i})]$ given in Section \ref{sec2}.
\end{lemma}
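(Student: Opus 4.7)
The plan is to derive both bounds from Malliavin integration by parts (the duality formula $\mathbb E[F\,\delta(u)]=\mathbb E[\langle DF,u\rangle_{\mathfrak H}]$) combined with two structural facts valid for $H>1/2$: first, the increment lies in the first chaos, $B_t^{H,i}-B_s^{H,i}=B^{H}(\mathbf 1_{[s,t]}\mathbf e_i)$, which gives $D(B_t^{H,i}-B_s^{H,i})=\mathbf 1_{[s,t]}\mathbf e_i$; and second, the $\mathfrak H$-inner product admits the kernel representation
$$\langle \phi,\psi\rangle_{\mathfrak H}=c_H\int_{t_0}^T\!\!\int_{t_0}^T \phi(u)\psi(v)|u-v|^{2H-2}\,du\,dv,$$
so the ``fractional'' weight $|u-v|^{2H-2}$ is integrable in one variable with $\int_{t_0}^T|u-v|^{2H-2}du\le C$ uniformly in $v\in[t_0,T]$ because $2H-2>-1$.

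For the single-increment estimate, I would apply duality directly to obtain
$$\mathbb E\bigl[F(B_t^{H,i}-B_s^{H,i})\bigr]=\mathbb E\bigl[\langle DF,\mathbf 1_{[s,t]}\mathbf e_i\rangle_{\mathfrak H}\bigr]=c_H\,\mathbb E\!\left[\int_s^t\!\!\int_{t_0}^T D_u^{(i)}F\;|u-v|^{2H-2}\,du\,dv\right],$$
then swap expectation and integration by Fubini, and bound using $\sup_u|\mathbb E\,D_u^{(i)}F|<\infty$ and the uniform weight bound above; the remaining $dv$-integral over $[s,t]$ produces exactly the factor $|t-s|$. For the product-increment estimate, write $\Delta_1=B^{H,i}_t-B^{H,i}_s$ and $\Delta_2=B^{H,i}_u-B^{H,i}_v$, and apply duality against $\Delta_2$:
$$\mathbb E[F\Delta_1\Delta_2]=\mathbb E\bigl[\langle D(F\Delta_1),\mathbf 1_{[v,u]}\mathbf e_i\rangle_{\mathfrak H}\bigr].$$
The Leibniz rule $D(F\Delta_1)=\Delta_1\,DF+F\,\mathbf 1_{[s,t]}\mathbf e_i$ splits this into a ``boundary'' piece $\mathbb E[F]\,\langle\mathbf 1_{[s,t]},\mathbf 1_{[v,u]}\rangle_{\mathfrak H}$, which is precisely the second term in the claim, plus a remainder $\mathbb E[\Delta_1\,\langle DF,\mathbf 1_{[v,u]}\mathbf e_i\rangle_{\mathfrak H}]$. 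Applying duality once more, this remainder becomes a quadruple integral of $\mathbb E\,D^{(i,i)}_{y,w}F$ against the product kernel $|y-z|^{2H-2}|w-x|^{2H-2}$ with $z\in[s,t]$, $x\in[v,u]$, $y,w\in[t_0,T]$; separating the $(y,z)$ and $(w,x)$ double integrals and repeating the one-variable weight bound on each gives the $C|t-s||u-v|$ piece.

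The main obstacle I anticipate is the rigorous justification of the two duality applications under only the sup-type integrability hypotheses on $\mathbb E\,DF$ and $\mathbb E\,D^2F$ assumed in the lemma, rather than the stronger $\mathbb D^{k,p}$-norm control under which the duality formula is stated in Section~\ref{sec2.2}. I would handle this by a standard density/truncation argument: approximate $F$ by smooth cylindrical variables (or truncate $F$ and its derivatives), apply the duality formula in that setting where all norms are finite, and pass to the limit using the $|u-v|^{2H-2}$ integrability together with the sup hypotheses to control the right-hand side. The remaining work, namely the Leibniz expansion, two applications of Fubini, and separation of the product kernel, is essentially bookkeeping once $H>1/2$ is in force.
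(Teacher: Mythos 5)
Your proposal is correct, but note that the paper does not actually prove this lemma: it is imported verbatim from Proposition 4.2 of \cite{Zhou23}, so there is no in-paper proof to compare against. Your argument --- writing $B_t^{H,i}-B_s^{H,i}=\delta(\mathbf 1_{[s,t]}\mathbf e_i)$, applying the duality formula once (resp.\ twice, via the Leibniz rule $D(F\Delta_1)=\Delta_1 DF+F\mathbf 1_{[s,t]}\mathbf e_i$, which cleanly isolates the $\mathbb E[F]\langle\mathbf 1_{[s,t]},\mathbf 1_{[v,u]}\rangle_{\mathfrak H}$ term), and then using that $\int_{t_0}^T|u-v|^{2H-2}\,du\le \tfrac{2}{2H-1}(T-t_0)^{2H-1}$ uniformly in $v$ so that each remaining integration over $[s,t]$ or $[v,u]$ contributes one factor of the interval length --- is exactly the standard proof and is essentially the one given in the cited reference. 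Your worry about justifying the two duality applications is somewhat overcautious: the hypothesis that $F$ ``possesses the second Malliavin derivative'' is naturally read as $F\in\mathbb D^{2,2}$, and since $\mathbf 1_{[s,t]}\mathbf e_i$ is deterministic (hence in $\mathrm{Dom}\,\delta$ with $\delta$ of it Gaussian), both integrations by parts and the subsequent Fubini exchanges are immediate; the sup-type hypotheses on $\mathbb E(D_sF)$ and $\mathbb E(D^2_{s,t}F)$ are only needed, as you use them, to bound the resulting iterated integrals.
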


If we let $F:=F_1F_2$, where $F_1$ satisfies all the conditions of $F$ in the Lemma \ref{sec2-lem-3-0} and $F_2$ is a non random bounded function, then we have the following lemma.
\begin{lemma}\label{sec2-lem-3}
Let $F_1: \Omega\to \mathbb{R}$ be a random variable that possesses the second Malliavin derivative. If $\mathbb{E}(|F_1|)<\infty$ and $\sup_{t_0\leq s\leq T}\mathbb{E}(D_s^{(i)}F_1)<\infty$,
then for any continuously bounded function $g_0$,  $t_0\leq s,t\leq T$
\begin{equation}\label{sec3-eq.B-1}
\Big|\mathbb{E}\Big[F_1\int_s^tg_0(u)dB^{H,i}_u\Big]\Big|\leq C|t-s|.
\end{equation}
Assume further that $\sup_{t_0\leq s\leq T}\mathbb{E}(D_{s,t}^{(i,j)}F_1)<\infty$, then for any continuously bounded functions $g_1, g_2$, $t_0\leq s<t\leq T$ and $t_0\leq v<u\leq T$
\begin{equation}\label{sec3-eq.B-2}
\Big|\mathbb{E}\Big[F_1\int_s^tg_1(a)dB^{H,i}_a\int_v^ug_2(b)dB^{H,j}_b\Big]\Big|\leq C|t-s||u-v|+C\langle \mathbf{1}_{[s,t]}, \mathbf{1}_{[v,u]}\rangle_{\mathfrak{H}}.
\end{equation}
\end{lemma}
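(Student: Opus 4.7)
The plan is to reduce Lemma \ref{sec2-lem-3} to Lemma \ref{sec2-lem-3-0} by approximating the fractional Wiener integrals with Riemann sums, exploiting the fact that $g_0,g_1,g_2$ are deterministic continuous bounded functions, which allows them to be factored out of expectations.

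For \eqref{sec3-eq.B-1}, along a sequence of partitions $\pi_n:s=\tau_0<\tau_1<\cdots<\tau_n=t$ with mesh tending to zero, introduce
$$
S_n \;=\; \sum_{k=0}^{n-1} g_0(\tau_k)\bigl(B^{H,i}_{\tau_{k+1}}-B^{H,i}_{\tau_k}\bigr).
$$
A direct computation with the fBm covariance kernel shows $S_n\to \int_s^t g_0(u)\,dB^{H,i}_u$ in $L^2(\Omega)$ for $H>1/2$. Since $F_1\in L^2(\Omega)$ (implied by the existence of $D F_1$), Cauchy--Schwarz justifies passing the limit through the expectation:
$$
\mathbb{E}\Big[F_1\int_s^t g_0(u)\,dB^{H,i}_u\Big] \;=\; \lim_{n\to\infty}\sum_{k=0}^{n-1}g_0(\tau_k)\,\mathbb{E}\bigl[F_1(B^{H,i}_{\tau_{k+1}}-B^{H,i}_{\tau_k})\bigr].
$$
The hypotheses on $F_1$ are exactly those required by the first inequality of Lemma \ref{sec2-lem-3-0} applied with $F:=F_1$, so each term is bounded by $C|g_0(\tau_k)|(\tau_{k+1}-\tau_k)\leq C(\tau_{k+1}-\tau_k)$. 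Summing and letting $n\to\infty$ gives the $C(t-s)$ bound of \eqref{sec3-eq.B-1}.

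For \eqref{sec3-eq.B-2}, use a double Riemann sum over partitions of $[s,t]$ and $[v,u]$:
$$
T_n \;=\; \sum_{k,\ell} g_1(\sigma_k)g_2(\tau_\ell)\bigl(B^{H,i}_{\sigma_{k+1}}-B^{H,i}_{\sigma_k}\bigr)\bigl(B^{H,j}_{\tau_{\ell+1}}-B^{H,j}_{\tau_\ell}\bigr).
$$
An analogous $L^2$-approximation argument yields $T_n\to\int_s^t g_1\,dB^{H,i}\cdot\int_v^u g_2\,dB^{H,j}$ in $L^2(\Omega)$. Applying the second inequality of Lemma \ref{sec2-lem-3-0} term by term and using $|g_1|,|g_2|\le C$,
$$
|\mathbb{E}[F_1 T_n]| \;\leq\; C\sum_{k,\ell}\bigl(|\sigma_{k+1}-\sigma_k||\tau_{\ell+1}-\tau_\ell| + \langle \mathbf{1}_{[\sigma_k,\sigma_{k+1}]},\mathbf{1}_{[\tau_\ell,\tau_{\ell+1}]}\rangle_{\mathfrak{H}}\bigr),
$$
which collapses by bilinearity of $\langle\cdot,\cdot\rangle_{\mathfrak{H}}$ to $C(t-s)(u-v)+C\langle \mathbf{1}_{[s,t]},\mathbf{1}_{[v,u]}\rangle_{\mathfrak{H}}$, giving \eqref{sec3-eq.B-2}.

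The main technical hurdle is the $L^2(\Omega)$-convergence of the Riemann sums and the attendant interchange with the expectation, both routine for $H>1/2$ via the explicit covariance kernel but requiring careful bookkeeping. An alternative, purely analytic route avoids this limiting argument altogether through Malliavin integration by parts: write $\int_s^t g_0\,dB^{H,i}=\delta^{(i)}(g_0\mathbf{1}_{[s,t]})$, apply the duality $\mathbb{E}[F_1\delta^{(i)}(\phi)]=\mathbb{E}[\langle D^{(i)}F_1,\phi\rangle_{\mathfrak{H}}]$ with the kernel formula $\langle \phi,\psi\rangle_{\mathfrak{H}}=c_H\iint\phi(r)\psi(a)|r-a|^{2H-2}\,dr\,da$, and reduce to the scalar estimate $\int_0^T\!\int_s^t|r-a|^{2H-2}\,da\,dr\leq C(t-s)$. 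A second duality then handles \eqref{sec3-eq.B-2}, where in the case $i=j$ the identity $D^{(i)}_r\delta^{(i)}(g_2\mathbf{1}_{[v,u]})=g_2(r)\mathbf{1}_{[v,u]}(r)$ produces the extra $\langle \mathbf{1}_{[s,t]},\mathbf{1}_{[v,u]}\rangle_{\mathfrak{H}}$ trace term in the bound.
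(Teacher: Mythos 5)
Your proof is correct, but it takes a genuinely different --- and in fact more careful --- route than the paper. The paper's own proof is a single sentence: it claims one can ``rewrite'' $\int_s^t g_j(u)\,dB^{H,i}_u$ as $F_2(B^{H,i}_t-B^{H,i}_s)$ with $F_2$ bounded and \emph{non-random}, and then invoke Lemma \ref{sec2-lem-3-0} directly. Taken literally, such a rewriting is only possible when $g_j$ is constant; a mean-value representation would give $F_2=g_j(\xi(\omega))$ with $\xi$ path-dependent, hence random, which is precisely what the paper's reduction needs to avoid. Your Riemann-sum argument is the rigorous implementation of what the paper intends: by linearity you apply Lemma \ref{sec2-lem-3-0} on each subinterval with one and the same constant $C$ (it depends only on $F_1$ and $T$, not on the endpoints of the interval), absorb the deterministic weights via $|g_j|\le C$, and pass to the limit. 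Two points deserve a word more than you give them: (i) the interchange of limit and expectation requires $F_1\in L^2(\Omega)$, which follows from $F_1\in\mathbb{D}^{2,2}$ (``possesses the second Malliavin derivative'') rather than from the stated $\mathbb{E}|F_1|<\infty$ alone; and (ii) for \eqref{sec3-eq.B-2} the convergence $\mathbb{E}[F_1T_n]\to\mathbb{E}[F_1\int g_1\,dB^{H,i}\int g_2\,dB^{H,j}]$ uses that the Riemann sums live in the first Wiener chaos, where all $L^p$ norms are equivalent, so their product converges in $L^2$. The collapse of $\sum_{k,\ell}\langle\mathbf{1}_{[\sigma_k,\sigma_{k+1}]},\mathbf{1}_{[\tau_\ell,\tau_{\ell+1}]}\rangle_{\mathfrak{H}}$ to $\langle\mathbf{1}_{[s,t]},\mathbf{1}_{[v,u]}\rangle_{\mathfrak{H}}$ by bilinearity is legitimate because each summand is nonnegative for $H>1/2$. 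Your alternative duality route, $\mathbb{E}[F_1\delta(\phi)]=\mathbb{E}\langle DF_1,\phi\rangle_{\mathfrak{H}}$ followed by the kernel estimate, is in the same spirit as the proof of the cited Proposition 4.2 of Zhou et al.\ and bypasses the discretization entirely; either version is a sound replacement for the paper's one-line justification.
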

\begin{proof}
Because $g_j, j=0,1,2$ here are continuous functions, we only need to rewrite $\int_s^tg_j(u)dB^{H,i}_u, j=0,1,2$, to $F_2(B^{H,i}_t-B^{H,i}_s)$. Then by the boundedness and non randomness of $F_2$, Lemma \ref{sec2-lem-3} follows from Lemma \ref{sec2-lem-3-0}.
\end{proof}

\section{Proof of Theorem \ref{sec1-thm.1}}\label{sec3}

In this section, we will provide proof of the main result (Theorem \ref{sec1-thm.1}) of this paper.  Note that $B^{H}_i, i=1,2,\cdots,M$, are independent $m$-dimensional fBm, and the boundedness of function $b_i, i=1,2,\cdots,M$. Therefore, for the convenience, we only need to consider the case $M=1$. At this point, according to equations \eqref{sec1-eq.1}  and \eqref{sec1-eq-BEE}, the expressions for $U$ and $V$ can be rewritten as
\begin{equation}\label{sec3-eq.1}
U_t=e^{A(t-t_0)}u_0+\int_0^te^{A(t-s)}f(U_s)ds+\int_0^te^{A(t-s)}b(s)dB_s^{H},
\end{equation}
and for $t_0<t_1<\cdots<t_{k}<t<t_{k+1}$,
\begin{equation}\label{sec3-eq.2}
V_t=e^{A(t-t_0)}u_0+\sum_{j=0}^{k-1}\int_{t_j}^{t_{j+1}}e^{A(t-s)}f(V_{t_j})ds+\int_{t_k}^te^{A(t-s)}f(V_{t_k})ds+\int_{0}^te^{A(t-s)}b(s)dB_s^{H},
\end{equation}
where we let $b(s):=b_1(s)$ and $B_s^{H}:=B_1^{H}(s)$. Without causing confusion, we will use this simple notation in the following content.

Before proving Theorem \ref{sec1-thm.1}, we also need to prove some technical lemmas, which will make the proof of our main result presented in a more convenient form.
\begin{lemma}\label{sec3-lem.1}
Assume the condition (A1) is satisfied and $\mathbb{E}|u_0|^p<\infty$. For  any integer $p\geq1$, there exist constants $C_{1,p,T}, C_{2,p,T}$ depending on $p$ and $T$, such that
\begin{equation}\label{sec3-eq.ut}
\mathbb{E}\left(\sup_{t_0\leq t\leq T}|U_t|^p\right)<C_{1,p,T}
\end{equation}
and
\begin{equation}\label{sec3-eq.vt}
\mathbb{E}\left(\sup_{t_0\leq t\leq T}|V_t|^p\right)<C_{2,p,T}.
\end{equation}
\end{lemma}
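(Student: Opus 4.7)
The plan is to write both $U_t$ and $V_t$ in the common form ``initial datum + drift integral + stochastic convolution'' and then close a Gronwall-type inequality.

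First consider $U_t$ as in \eqref{sec3-eq.1}. Applying $|a+b+c|^p\le 3^{p-1}(|a|^p+|b|^p+|c|^p)$, the uniform operator-norm bound $|e^{A(t-s)}|\le e^{\mu[A](t-s)}\le C$ coming from (A4) and the inequality $|e^{At}|\le e^{\mu[A]t}$ gives $|e^{A(t-t_0)}u_0|^p\le C|u_0|^p$. For the drift term, H\"{o}lder's inequality combined with (A1) yields
\[
\Bigl|\int_{t_0}^t e^{A(t-s)}f(U_s)\,ds\Bigr|^p \le C(T-t_0)^{p-1}\int_{t_0}^t (1+|U_s|^p)\,ds.
\]

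The delicate piece is the stochastic convolution $X_t:=\int_{t_0}^t e^{A(t-s)}b(s)\,dB^H_s$, since the $t$-dependence of the integrand spoils the obvious sup bound. The key trick is to factor $e^{A(t-s)}=e^{A(t-t_0)}e^{-A(s-t_0)}$ and set $Y_t:=\int_{t_0}^t e^{-A(s-t_0)}b(s)\,dB^H_s$, so that $X_t=e^{A(t-t_0)}Y_t$. The integrand of $Y$ is deterministic, hence $Y$ is a centered Gaussian process, and the fBm covariance formula gives
\[
\mathbb{E}|Y_t-Y_s|^2 = c_H\int_s^t\!\int_s^t \bigl\langle e^{-A(u-t_0)}b(u),\,e^{-A(v-t_0)}b(v)\bigr\rangle |u-v|^{2H-2}\,du\,dv.
\]
Combining (A3) with the continuity of $e^{-A(\cdot-t_0)}$ on the compact interval $[t_0,T]$ yields $\mathbb{E}|Y_t-Y_s|^2\le C|t-s|^{2H}$, and Gaussianity upgrades this to $\mathbb{E}|Y_t-Y_s|^p\le C_p|t-s|^{pH}$ for every $p\ge 1$. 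Kolmogorov's continuity criterion then gives $\mathbb{E}\sup_{t_0\le t\le T}|Y_t|^p<\infty$, whence $\mathbb{E}\sup_{t_0\le t\le T}|X_t|^p<\infty$ via $|e^{A(t-t_0)}|\le C$.

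Assembling the three estimates, taking $\sup_{t_0\le t\le \tau}$ and then expectation gives
\[
\mathbb{E}\sup_{t_0\le t\le \tau}|U_t|^p \le C_p\Bigl(\mathbb{E}|u_0|^p+1+\int_{t_0}^{\tau}\mathbb{E}\sup_{t_0\le r\le s}|U_r|^p\,ds\Bigr),
\]
and Gronwall's lemma closes \eqref{sec3-eq.ut}. The proof of \eqref{sec3-eq.vt} is essentially identical after rewriting \eqref{sec3-eq.2} in the unified form $V_t=e^{A(t-t_0)}u_0+\int_{t_0}^t e^{A(t-s)}f(V_{\eta(s)})\,ds+X_t$ with $\eta(s):=t_j$ on $[t_j,t_{j+1})$; the pointwise inequality $|V_{\eta(s)}|^p\le \sup_{t_0\le r\le s}|V_r|^p$ lets the same Gronwall step go through. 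The main obstacle is really just the stochastic convolution, and the factorization trick reducing it to a $t$-independent Gaussian integrand handled by Kolmogorov is the decisive move.
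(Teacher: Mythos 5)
Your proof is correct, but it treats the stochastic convolution differently from the paper. The paper bounds $\int_{t_0}^t e^{A(t-s)}b(s)\,dB_s^H$ pointwise in $t$ by the equivalence of Gaussian moments (the Kahane--Khintchine inequality), reducing $\mathbb{E}|\cdot|^p$ to $(\mathbb{E}|\cdot|^2)^{p/2}$ and computing the variance directly from the fBm covariance kernel $|u-v|^{2H-2}$, which yields the $(t-t_0)^{pH}$ term fed into Gronwall; it is actually somewhat cavalier about moving $\sup_t$ inside the expectation for that term. Your factorization $e^{A(t-s)}=e^{A(t-t_0)}e^{-A(s-t_0)}$, which turns the convolution into a genuine Gaussian process with $t$-independent integrand so that Kolmogorov's criterion delivers $\mathbb{E}\sup_t|X_t|^p<\infty$ rigorously, is a cleaner way to justify the supremum and is a legitimate alternative. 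The one caveat worth flagging: your constant absorbs $\sup_{s\in[t_0,T]}|e^{-A(s-t_0)}|$, which in the stiff regime ($\mu[A]\le 0$, $|A|(T-t_0)\gg 1$) can be of size $e^{|A|(T-t_0)}$ --- exactly the kind of $|A|$-dependence the paper explicitly tries to keep out of its estimates (its own constants only involve $e^{\mu[A](t-t_0)}\le C$ via (A4)). This does not invalidate the lemma as stated, since the constants are merely required to be finite, but it weakens the result quantitatively compared with the paper's route. Your treatment of $V$ by a continuous Gronwall argument with $V_{\eta(s)}$ in place of the paper's discrete recursion $\mathbb{E}|V_k|^p\le c(1+\sum_j h_j\mathbb{E}|V_j|^p)$ is equivalent and fine.
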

\begin{proof}
For \eqref{sec3-eq.ut}, we use basic inequality for \eqref{sec3-eq.1},
$$
|U_t|^p\leq c_p|e^{A(t-t_0)}u_0|^p+c_p\left|\int_{t_0}^te^{A(t-s)} f(U_s)ds\right|^p+c_p\left|\int_{t_0}^te^{A(t-s)}b(s)dB_s^H\right|^p.
$$
Then using condition (A1) and the Kahane-Khintchine formula, we obtain
\begin{align*}
\mathbb{E}\left(\sup_{t_0\leq t\leq T}|U_t|^p\right)&\leq c_p\sup_{t}e^{p\mu[A](t-t_0)}\mathbb{E}|u_0|^p+c_{T,p}\sup_{t}e^{p\mu[A](t-t_0)}\int_{t_0}^t\mathbb{E}\sup_s|U_s|^pds\\
&\qquad+c_p\left(\mathbb{E}\left|\int_{t_0}^te^{A(t-s)}b(s)dB_s^H\right|^2\right)^{p/2}\\
&\leq c_p\sup_{t}e^{p\mu[A](t-t_0)}\mathbb{E}|u_0|^p+c_{T,p}\sup_{t}e^{p\mu[A](t-t_0)}\int_{t_0}^t\mathbb{E}\sup_s|U_s|^pds\\
&\qquad+c_p\left(\int_{t_0}^t\int_{t_0}^t|e^{A(t-u)}b_i(u)||e^{A(t-v)}b(v)||u-v|^{2H-2}dudv\right)^{p/2}\\
&\leq c_p\sup_{t}e^{p\mu[A](t-t_0)}\left(\mathbb{E}|u_0|^p+\int_{t_0}^t\mathbb{E}\sup_s|U_s|^pds+(t-t_0)^{2H}\right).
\end{align*}
Thus, the desired result follows from the Gronwall inequality.

For the proof of \eqref{sec3-eq.vt},  we can use the similar method as in \eqref{sec3-eq.ut} to study \eqref{sec3-eq.2} ,
\begin{align}\label{sec3-eq.vt1}
\mathbb{E}\left(\sup_t|V_t|^p\right)
&\leq c_p\sup_{t}e^{p\mu[A](t-t_0)}\left(\mathbb{E}|u_0|^p+\sum_{j=0}^{k-1}h_j\mathbb{E}|V_j|^p+(t-t_k)\mathbb{E}|V_k|^p+(t-t_0)^{2H}\right).
\end{align}
Therefore, according to the recursive relationship
\begin{align}\label{sec3-eq.vt2}
\mathbb{E}|V_k|^p
&\leq c_{p,T}\left(1+\sum_{j=0}^{k-1}h_j\mathbb{E}|V_j|^p\right), ~~k=1,2,\cdots,n,
\end{align}
and
\begin{align}\label{sec3-eq.vt3}
\mathbb{E}|V_0|^p=\mathbb{E}|u_0|^p<\infty.
\end{align}
Together \eqref{sec3-eq.vt1}--\eqref{sec3-eq.vt3},  we obtain the desired result.
\end{proof}

\begin{lemma}\label{sec3-lem.2}
Assume the conditions (A1)-(A3) are satisfied and $\mathbb{E}|u_0|^p<\infty$, then there exist constants $C_{3,p,T}, C_{4,p,T}$ depending on $p$ and $T$, such that
\begin{align}\label{sec3-eq.DrU}
\mathbb{E}\left[\sup_{t_0\leq r, t\leq T}|D_rU_t|^p\right]\leq C_{3,p,T}
\end{align}
and
\begin{align}\label{sec3-eq.DrrU}
\mathbb{E}\left[\sup_{t_0\leq r,r' t\leq T}|D^2_{r,r'}U_t|^p\right]\leq C_{4,p,T}.
\end{align}
\end{lemma}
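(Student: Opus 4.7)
The plan is to Malliavin-differentiate the integral equation \eqref{sec3-eq.1} satisfied by $U_t$, derive a linear Volterra-type equation for $D_r U_t$, and then close things with a Gronwall argument; the second-derivative bound is obtained by iterating this procedure one more time.

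For the first bound, I would formally apply $D_r$ to both sides of \eqref{sec3-eq.1}. The deterministic term $e^{A(t-t_0)}u_0$ drops out. For the drift, the chain rule in Malliavin calculus (together with $D_r U_s = 0$ for $s<r$) gives
$$D_r\!\int_{t_0}^t e^{A(t-s)}f(U_s)\,ds=\int_r^t e^{A(t-s)}\partial f(U_s)\,D_rU_s\,ds,$$
while for the fBm Wiener integral with deterministic integrand one obtains $D_r\!\int_{t_0}^t e^{A(t-s)}b(s)\,dB_s^H = e^{A(t-r)}b(r)\mathbf{1}_{[t_0,t]}(r)$. Combining, for $t_0\le r\le t$,
$$D_rU_t = e^{A(t-r)}b(r)+\int_r^t e^{A(t-s)}\partial f(U_s)\,D_rU_s\,ds.$$
Using $|e^{A(t-s)}|\le e^{\mu[A](t-s)}\le C$ from (A4), boundedness of $b$ from (A3), and the fact that the Lipschitz condition (A1) forces $|\partial f|\le\kappa$, I get $|D_rU_t|\le C+C\int_r^t|D_rU_s|\,ds$. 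A deterministic Gronwall inequality immediately yields $\sup_{r,t}|D_rU_t|\le C$, which gives \eqref{sec3-eq.DrU} for every $p\geq 1$.

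For the second derivative, I would apply $D_{r'}$ to the equation just derived for $D_rU_t$. The first term is deterministic and drops out, while the chain and product rules produce
$$D^2_{r',r}U_t=\int_{r\vee r'}^{t}e^{A(t-s)}\partial^2f(U_s)\,D_{r'}U_s\,D_rU_s\,ds+\int_{r\vee r'}^{t}e^{A(t-s)}\partial f(U_s)\,D^2_{r',r}U_s\,ds.$$
Here assumption (A2) enters decisively: $|\partial^2 f(U_s)|\le 1+|U_s|^\nu$. Combining this with the already-established uniform bound on $D_rU_s$, (A1), (A3), (A4), one obtains
$$|D^2_{r',r}U_t|\le C\int_{t_0}^{T}(1+|U_s|^\nu)\,ds+C\int_{r\vee r'}^{t}|D^2_{r',r}U_s|\,ds,$$
so another Gronwall step and Jensen's inequality give $|D^2_{r',r}U_t|^p\le C\int_{t_0}^{T}(1+|U_s|^{p\nu})\,ds$. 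Taking expectation and invoking the moment estimate \eqref{sec3-eq.ut} of Lemma~\ref{sec3-lem.1} with exponent $p\nu$ yields \eqref{sec3-eq.DrrU}.

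The main technical obstacle I anticipate is the rigorous justification of the Malliavin derivative of the fractional Wiener integral $\int_{t_0}^t e^{A(t-s)}b(s)\,dB_s^H$ as a pointwise object rather than as an $\mathfrak{H}$-valued random variable; one must check that this integral lies in $\mathbb{D}^{2,p}$ and that its derivative admits a continuous version $r\mapsto e^{A(t-r)}b(r)\mathbf{1}_{[t_0,t]}(r)$, which in turn requires smoothness of $b$ and $e^{A\cdot}$ on $[t_0,T]$ (both available here). A parallel issue is the validity of the chain rule for $f(U_s)$ given only polynomial growth of $\partial^2 f,\partial^3 f$; this is standard once one first localizes (e.g.\ by stopping times) or approximates $U$ by Picard iterates whose Malliavin regularity can be propagated with the same Gronwall estimates, and then passes to the limit using the moment bounds of Lemma~\ref{sec3-lem.1}.
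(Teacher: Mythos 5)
Your proposal follows essentially the same route as the paper: Malliavin-differentiate the integral equation \eqref{sec3-eq.1} to get a linear Volterra equation for $D_rU_t$ (and, by differentiating once more, for $D^2_{r,r'}U_t$), then close with Gronwall, using (A2) and the moment bound \eqref{sec3-eq.ut} to control the $\partial^2 f$ term. If anything your version is slightly cleaner at the first-derivative stage, since you exploit the deterministic bound $|\partial f|\le\kappa$ coming from the Lipschitz condition (A1) to run a pathwise Gronwall, whereas the paper bounds $\partial f$ via the polynomial growth in (A2) and applies Gronwall at the level of expectations, which requires an extra (and somewhat glossed-over) step to separate the random factor $\sup_s|U_s|^{p\nu}$ from the unknown.
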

\begin{proof}
Let $r\in[t_0,T]$. Taking the Malliavin derivative $D_r$ on both sides of \eqref{sec3-eq.1} leads to
\begin{align}\label{sec3-eq.DrU-1}
D_r U_t=\int_r^te^{A(t-s)}\partial f(U_s)D_rU_sds+e^{A(t-r)}b(r).
\end{align}
Then by conditions (A2)--(A3) and inequality \eqref{sec3-eq.ut}, we have
\begin{align*}
\mathbb{E}\left(\sup_{r,t}|D_r U_t|^p\right)&\leq c_{p,t}\sup_se^{\mu[A](t-s)}\left(1+\mathbb{E}(\sup_s|U_s|^{p\nu})\right)\int_r^t\mathbb{E}\left(\sup_{r,s}|D_r U_s|^p\right)ds\\
&\quad\qquad\qquad+c_{p,t}\sup_se^{\mu[A](t-s)}\\
&\leq c_{p,T}\left(\int_r^t\mathbb{E}\left(\sup_{r,s}|D_r U_s|^p\right)ds+1\right).
\end{align*}
Thus we get \eqref{sec3-eq.DrU} by Gronwall inequality.

For the proof of \eqref{sec3-eq.DrrU}. Differentiating \eqref{sec3-eq.DrU-1}, we have for $r\vee r'<t$,
\begin{align*}
D_{r,r'}^2 U_t&=\int_{r\vee r'}^te^{A(t-s)}\partial f(U_s)D^2_{r,r'}U_sds+\int_{r\vee r'}^te^{A(t-s)}\langle\partial^2 f(U_s),D_{r}U_s\otimes D_{r'}U_s\rangle ds.
\end{align*}
By condition (A2) and \eqref{sec3-eq.ut}, we can see
$$\mathbb{E}\left(\sup_s|e^{A(t-s)}\partial f(U_s)+e^{A(t-s)}\partial^2 f(U_s)|^p\right)<c_{p,T},$$
and by the result in \eqref{sec3-eq.DrU},
$$\mathbb{E}\left(\sup_{r,r',s}|D_{r}U_s\otimes D_{r'}U_s|^{p}\right)\leq \mathbb{E}\left(\sup_{r,s}|D_{r}U_s|^{2p}\right)^{1/2}\mathbb{E}\left(\sup_{r',s}| D_{r'}U_s|^{2p}\right)^{1/2}<c_{p,T}.$$
This gives
\begin{align*}
\mathbb{E}\left(\sup_{r,r',t}|D_{r,r'}^2 U_t|^p\right)&\leq c_{p,T}+c_{p,T}\int_{r\vee r'}^t\mathbb{E}\left(\sup_{r,r',s}|D_{r,r'}^2 U_s|^p\right)ds.
\end{align*}
Then the desired result follows from the Gronwall inequality.
\end{proof}

\begin{lemma}\label{sec3-lem.3}
Assume the conditions (A1)-(A3) are satisfied and $\mathbb{E}|u_0|^p<\infty$, then there exist constants $C_{5,p,T}, C_{6,p,T}$ depending on $p$ and $T$, such that
\begin{align}\label{sec3-eq.DrV}
\mathbb{E}\left[\sup_{t_0\leq r, t\leq T}|D_rV_t|^p\right]\leq C_{5,p,T}
\end{align}
and
\begin{align}\label{sec3-eq.DrrV}
\mathbb{E}\left[\sup_{t_0\leq r,r', t\leq T}|D^2_{r,r'}V_t|^p\right]\leq C_{6,p,T}.
\end{align}
\end{lemma}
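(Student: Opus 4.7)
The plan is to mirror the argument of Lemma 3.2, replacing the continuous integral equation for $U_t$ by the piecewise-defined recursion (3.2) for $V_t$. First I apply $D_r$ to both sides of (3.2). Because $V_{t_j}$ is $\mathcal{F}_{t_j}$-measurable, $D_r V_{t_j}=0$ whenever $r>t_j$, so for $t\in(t_k,t_{k+1}]$ and $r\in[t_0,t]$,
\begin{align*}
D_r V_t \;=\; \sum_{j\,:\,t_j\ge r}\int_{t_j}^{t_{j+1}\wedge t} e^{A(t-s)}\,\partial f(V_{t_j})\,D_r V_{t_j}\,ds \;+\; e^{A(t-r)}b(r).
\end{align*}
The last term is bounded by a constant via (A3)--(A4), while the sum is a discrete analogue of the convolution appearing in the proof of \eqref{sec3-eq.DrU}.

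Next I take the $p$-th moment. Assumption (A2) gives $|\partial f(V_{t_j})|\le 1+|V_{t_j}|^{\nu}$; applying Cauchy--Schwarz to split this factor from $|D_r V_{t_j}|^p$ and using Lemma \ref{sec3-lem.1} to control $\mathbb{E}\sup_t|V_t|^{2p\nu}$, I obtain a discrete Gronwall-type inequality of the form
\begin{align*}
\mathbb{E}\sup_{r\le t_{k+1}}|D_r V_{t_{k+1}}|^{p} \;\le\; c_{p,T} \;+\; c_{p,T}\sum_{j=0}^{k} h_j\,\bigl(\mathbb{E}\sup_{r\le t_j}|D_r V_{t_j}|^{2p}\bigr)^{1/2}.
\end{align*}
Running the estimate first at a sufficiently large moment and then applying the discrete Gronwall lemma delivers a uniform bound at grid points; the bound for general $t\in(t_k,t_{k+1}]$ follows from one more application of (A4), yielding \eqref{sec3-eq.DrV}.

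For the second Malliavin derivative I differentiate $D_r V_t$ a second time. This produces the recursive term $e^{A(t-s)}\,\partial f(V_{t_j})\,D^2_{r,r'}V_{t_j}$ together with an additional term $e^{A(t-s)}\,\langle\partial^2 f(V_{t_j}),\,D_r V_{t_j}\otimes D_{r'}V_{t_j}\rangle$. Using (A2), Lemma \ref{sec3-lem.1}, and the bound \eqref{sec3-eq.DrV} just established, together with Cauchy--Schwarz to separate the polynomial factor $(1+|V_{t_j}|^{\nu})$ from the tensor product of first derivatives, this additional term has uniformly bounded $p$-th moments. A second application of the discrete Gronwall inequality then yields \eqref{sec3-eq.DrrV}.

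The main obstacle is bookkeeping rather than any new analytic idea: one must keep track of the interval containing $r$ that is only partially integrated, index the discrete sums correctly over $\{j:t_j\ge r\}$, and pick the Hölder exponents so that the polynomial growth parameter $\nu$ from (A2) is absorbed into the moment bounds of Lemma \ref{sec3-lem.1}. Since all the required ingredients---(A1)--(A4) and Lemmas \ref{sec3-lem.1}--\ref{sec3-lem.2}---are in place, the proof assembles in the same mechanical fashion as Lemma \ref{sec3-lem.2}.
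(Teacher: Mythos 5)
Your argument is essentially the paper's own proof: differentiate the scheme \eqref{sec3-eq.2}, use that $D_rV_{t_j}=0$ for $r>t_j$ so that only the terms with $t_j\ge r$ survive (the paper writes this out case by case in \eqref{sec3-eq.DrV1}--\eqref{sec3-eq.DrV4}), and close by induction/discrete Gronwall using (A2)--(A3) and Lemma \ref{sec3-lem.1}, with the same two extra terms ($\partial f\cdot D^2V$ and $\langle\partial^2 f, D_rV\otimes D_{r'}V\rangle$) appearing for the second derivative. The one rough edge in your write-up --- the Cauchy--Schwarz step bounds the $p$-th moment of $D_rV_{t_{k+1}}$ by $2p$-th moments of the $D_rV_{t_j}$, so the Gronwall recursion does not literally close at a fixed exponent and ``starting at a sufficiently large moment'' does not terminate --- is a looseness that the paper's one-sentence induction shares rather than resolves.
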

\begin{proof}
According to \eqref{sec3-eq.2}, we consider the Malliavin derivative $D_rV_t$. It is easy to find that,
for$t_k<r<t$,
\begin{align}\label{sec3-eq.DrV1}
D_rV_t=e^{A(t-r)}b(r).
\end{align}
If $t_{k-1}<r<t_{k}$, we can see
\begin{align}\label{sec3-eq.DrV2}
D_rV_t=\int_{t_k}^te^{A(t-s)}ds\partial f(V_k)D_rV_k+e^{A(t-r)}b(r).
\end{align}
If  $t_{k-2}<r<t_{k-1}$,
\begin{align}\label{sec3-eq.DrV3}
D_rV_t=\int_{t_{k-1}}^{t_k}e^{A(t-s)}ds\partial f(V_{k-1})D_rV_{k-1}+\int_{t_k}^te^{A(t-s)}ds\partial f(V_k)D_rV_k+e^{A(t-r)}b(r).
\end{align}

Similarly, for $t_{0}<r<t_{1}$,
\begin{align}\label{sec3-eq.DrV4}
D_rV_t=\int_{t_1}^{t_2}e^{A(t-s)}ds\partial f(V_{1})D_rV_{1}+\cdots+\int_{t_k}^te^{A(t-s)}ds\partial f(V_k)D_rV_k+e^{A(t-r)}b(r).
\end{align}
Thus, to prove \eqref{sec3-eq.DrV}, we only need to use induction to calculate the upper bound of $D_rV_j, j=0,1,\ldots,k$ sequentially.

Note that,  for $t_0<r<t_1$, $D_rV_0=0$ and $D_rV_1=e^{A(t_1-r)}b(r)$. Then
$$\mathbb{E}\left(\sup_r|D_rV_1|^p\right)\leq C_{p,T}.$$
Then by conditions (A2)--(A3) and \eqref{sec3-eq.vt}, together \eqref{sec3-eq.DrV1}--\eqref{sec3-eq.DrV4}, we have the inequality \eqref{sec3-eq.DrV}.

For the inequality \eqref{sec3-eq.DrrV},  we only need to use the methods similar to \eqref{sec3-eq.DrrU} and \eqref{sec3-eq.DrV}. This completes the proof.
\end{proof}

\begin{lemma}\label{sec3-lem.4}
Assume the conditions (A1)-(A3) are satisfied and $\mathbb{E}|u_0|^p<\infty$, then there exist constants $C_{7,p,T}, C_{8,p,T}$ depending on $p$ and $T$, such that
\begin{align}\label{sec3-eq.Drint}
\mathbb{E}\left[\sup_{t_0\leq r, t\leq T}|D_r\left(\int_0^1\partial f[\theta U_{t_j}+(1-\theta)V_j]d\theta\right)|^p\right]\leq C_{7,p,T}
\end{align}
and
\begin{align}\label{sec3-eq.Drrint}
\mathbb{E}\left[\sup_{t_0\leq r,r', t\leq T}|D^2_{r,r'}\left(\int_0^1\partial f[\theta U_{t_j}+(1-\theta)V_j]d\theta\right)|^p\right]\leq C_{8,p,T}.
\end{align}
\end{lemma}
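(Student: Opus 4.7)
The plan is to reduce both bounds to a direct application of the chain rule for the Malliavin derivative together with Lemmas \ref{sec3-lem.1}--\ref{sec3-lem.3} and assumption (A2). Since the integrand depends smoothly on $\theta$ and the integrand's randomness comes only through $U_{t_j}$ and $V_j$, the Malliavin derivative commutes with the $\theta$-integral, so I would first exchange $D_r$ (resp.\ $D^2_{r,r'}$) with $\int_0^1 \cdot\, d\theta$ and then differentiate the composition $\partial f\bigl[\theta U_{t_j}+(1-\theta)V_j\bigr]$ pointwise in $\theta$.

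For \eqref{sec3-eq.Drint}, the chain rule gives
\begin{equation*}
D_r\!\left(\int_0^1\partial f[\theta U_{t_j}+(1-\theta)V_j]\,d\theta\right)
=\int_0^1\partial^2 f[\theta U_{t_j}+(1-\theta)V_j]\bigl(\theta D_rU_{t_j}+(1-\theta)D_rV_j\bigr)d\theta.
\end{equation*}
By (A2), $|\partial^2 f(x)|\le 1+|x|^{\nu}$, so the integrand is bounded pointwise by $\bigl(1+|U_{t_j}|^{\nu}+|V_j|^{\nu}\bigr)\bigl(|D_rU_{t_j}|+|D_rV_j|\bigr)$. Taking $L^p$-norms and applying H\"older's inequality with some large enough exponent reduces the estimate to
\begin{equation*}
\mathbb{E}\Bigl[\sup_{r,t}|D_r U_t|^{2p}\Bigr]^{1/2}+\mathbb{E}\Bigl[\sup_{r,t}|D_r V_t|^{2p}\Bigr]^{1/2}
\quad\text{and}\quad
1+\mathbb{E}\Bigl[\sup_t|U_t|^{2\nu p}\Bigr]^{1/2}+\mathbb{E}\Bigl[\sup_t|V_t|^{2\nu p}\Bigr]^{1/2},
\end{equation*}
all of which are finite by Lemmas \ref{sec3-lem.1} and \ref{sec3-lem.3} (these give arbitrary finite moments, so choosing $p$ large to accommodate the factor $\nu$ causes no trouble).

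For \eqref{sec3-eq.Drrint}, a second application of the chain rule produces two types of terms: one involving $\partial^3 f$ tensored with $\bigl(\theta D_rU_{t_j}+(1-\theta)D_rV_j\bigr)\otimes\bigl(\theta D_{r'}U_{t_j}+(1-\theta)D_{r'}V_j\bigr)$, and one involving $\partial^2 f$ times the second Malliavin derivative $\theta D^2_{r,r'}U_{t_j}+(1-\theta)D^2_{r,r'}V_j$. Assumption (A2) controls $\partial^2 f$ and $\partial^3 f$ by polynomials in $|U_{t_j}|+|V_j|$, and then exactly the same H\"older argument --- now invoking the second-derivative bounds \eqref{sec3-eq.DrrU} and \eqref{sec3-eq.DrrV} in addition to the first-order ones --- yields the result.

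There is no real obstacle; the only points requiring attention are (i) justifying the interchange of $D_r$ (and $D^2_{r,r'}$) with $\int_0^1 d\theta$, which follows because the integrand is smooth in $\theta$ and the relevant Malliavin--Sobolev norms are uniformly bounded in $\theta$, and (ii) choosing exponents in H\"older's inequality large enough to absorb the polynomial growth factor $\nu$ from (A2) using the arbitrary-moment bounds of Lemma \ref{sec3-lem.1}. Both are routine, so the lemma follows.
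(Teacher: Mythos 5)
Your proposal is correct and follows essentially the same route as the paper: differentiate under the $\theta$-integral via the chain rule to produce the $\partial^2 f$ (resp.\ $\partial^3 f$ and $\partial^2 f$) terms, then control them using the polynomial growth in (A2) together with the moment bounds of Lemmas \ref{sec3-lem.1}--\ref{sec3-lem.3} and H\"older's inequality. The paper states this more tersely, while you spell out the interchange of $D_r$ with $\int_0^1 d\theta$ and the choice of H\"older exponents, which are indeed the only points needing care.
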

\begin{proof}
The proof of this lemma is similar to the proof of Lemma \ref{sec3-lem.3}, we need to  rewrite $D_r\left(\partial f[\theta U_{t_j}+(1-\theta)V_j]\right)$ as
$$
D_r\left(\partial f[\theta U_{t_j}+(1-\theta)V_j]\right)=\partial^2 f[\theta U_{t_j}+(1-\theta)V_j]\left(\theta D_rU_{t_j}+(1-\theta)D_rV_j\right).
$$
Then based on the polynomial growth condition of the second derivative of function $f$ in assumption (A2) and Lemmas \ref{sec3-lem.1}--\ref{sec3-lem.4}, we obtain \eqref{sec3-eq.Drint}.

By the same way,
\begin{align*}
&D^2_{r,r'}\left(\partial f[\theta U_{t_j}+(1-\theta)V_j]\right)\\
&=D_{r'}\left(\partial^2 f[\theta U_{t_j}+(1-\theta)V_j]\left(\theta D_rU_{t_j}+(1-\theta)D_rV_j\right)\right)\\
&=\partial^3 f[\theta U_{t_j}+(1-\theta)V_j]\left(\theta D_rU_{t_j}+(1-\theta)D_rV_j\right)\left(\theta D_{r'}U_{t_j}+(1-\theta)D_{r'}V_j\right)\\
&\qquad+\partial^2 f[\theta U_{t_j}+(1-\theta)V_j]\left(\theta D^2_{r,r'}U_{t_j}+(1-\theta)D^2_{r,r'}V_j\right).
\end{align*}
Then \eqref{sec3-eq.Drrint} follows from the polynomial growth condition of the third derivative of function $f$ and Lemmas \ref{sec3-lem.1}--\ref{sec3-lem.4}.
\end{proof}

With the four technical lemmas mentioned above, we can consider proving the main conclusion of this paper.\\
\textbf{Proof of Theorem \ref{sec1-thm.1}.} The proof is divided into fore steps.

\textbf{Step 1: A representation for the error process \eqref{sec3-eq.Z}. }
Denote by the error process $Z_t:=U_t-V_t$, and for convenience we will also denote by $Z_k:=Z_{t_k}$. Then
$$
Z_t=e^{A(t-t_k)}Z_{t_k}+\int_{t_k}^te^{A(t-s)}[f(U_s)-f(V_k)]ds.
$$
For the integral term,
\begin{align*}
\int_{t_k}^te^{A(t-s)}[f(U_s)-f(V_k)]ds
=\int_{t_k}^te^{A(t-s)}[f(U_s)-f(U_{t_k})]ds+\int_{t_k}^te^{A(t-s)}[f(U_{t_k})-f(V_k)]ds,
\end{align*}
where the second integral on the right side of the above equation
\begin{align*}
\int_{t_k}^te^{A(t-s)}[f(U_{t_k})-f(V_k)]ds
&=\int_{t_k}^te^{A(t-s)}ds(U_{t_k}-V_k)\int_0^1\partial f[\theta U_{t_k}+(1-\theta)V_k]d\theta\\
&=A^{-1}(e^{A(t-t_k)}-Id)Z_k\int_0^1\partial f[\theta U_{t_k}+(1-\theta)V_k]d\theta.
\end{align*}
Thus, we can rewrite $Z_t$ as follows,
\begin{align*}
Z_t=:Q_k(t)Z_{t_k}+R_k(t),
\end{align*}
where
$$
R_k(t)=\int_{t_k}^te^{A(t-s)}[f(U_s)-f(U_{t_k})]ds
$$
and
$$Q_k(t)=e^{A(t-t_k)}+A^{-1}(e^{A(t-t_k)}-Id)\int_0^1\partial f[\theta U_{t_k}+(1-\theta)V_k]d\theta.$$

By assumption (A4), we obtain that
$$
|Q_k(t)|\leq \sup_{t}e^{\mu[A]t}+L(t-t_k)[1+\sup_t|U_{t}|+\sup_t|V_t|].
$$

If  $t=t_{k+1}$,
\begin{align}\label{sec3-eq.Z1}
Z_{k+1}&=Q_{k}(k+1)Z_k+R_{k}(k+1)\nonumber\\
&=Q_{k}(k+1)\Big(Q_{k-1}(k)Z_{k-1}+R_{k-1}(k)\Big)+R_{k}(k+1)\nonumber\\
&\cdots\nonumber\\
&=\prod_{j=0}^kQ_{j}(j+1)Z_0+\sum_{j=0}^{k}\prod_{\ell=j+1}^kQ_{\ell}(\ell+1)R_{j}(j+1),
\end{align}
where we let $\prod_{\ell=k+1}^kQ_{\ell}(\ell+1)=1$ by convention and
$$
R_j(j+1)=\int_{t_j}^{t_{j+1}}e^{A(t_{j+1}-s)}[f(U_s)-f(U_{t_j})]ds
$$
and
$$Q_j(j+1)=e^{A(t_{j+1}-t_j)}+A^{-1}(e^{A(t_{j+1}-t_j)}-Id)\int_0^1\partial f[\theta U_{t_j}+(1-\theta)V_j]d\theta.$$

Note that $Z_0=U_0-V_0=0$. Then for all $j=0,1,\ldots,k$,
\begin{align}\label{sec3-eq.Q}
\mathbb{E}\sup_j|Q_j(j+1)|^p\leq C.
\end{align}

If $t\in(t_k, t_{k+1})$, and $Z_0=0$, similar to \eqref{sec3-eq.Z1}, we have
\begin{align}\label{sec3-eq.Z}
Z_{t}=\sum_{j=0}^k\prod_{\ell=j+1}^kQ_{\ell}^tR_{j}^t=\sum_{j=0}^kM_j^tR_{j}^t,
\end{align}
where $\prod_{\ell=j+1}^kQ_{\ell}^t=:M_j^t$,
\begin{align*}
R_{j}^t&=\int_{t_j}^{t_{j+1}\wedge t}e^{A(t_{j+1}\wedge t-s)}[f(U_s)-f(U_{t_{j}})]ds\\
&=\int_{t_j}^{t_{j+1}\wedge t}e^{A(t_{j+1}\wedge t-s)}\int_0^1\partial f[\theta U_s+(1-\theta)U_{t_{j}}]d\theta \Delta U_{t_j,s}ds\\
&=:\int_{t_j}^{t_{j+1}\wedge t}e^{A(t_{j+1}\wedge t-s)}f_1(j,s)\Delta U_{t_j,s}ds
\end{align*}
and
$$Q_{j}^t=e^{A(t_{j+1}\wedge t-t_j)}+A^{-1}(e^{A(t_{j+1}\wedge t-t_j)}-Id)\int_0^1\partial f[\theta U_{t_j}+(1-\theta)V_j]d\theta.$$

\textbf{Step 2: Estimate of $M_j^t$. } If we use the inequality \eqref{sec3-eq.Q} to estimate of $M_j^t$ may not be optimal rate. To obtain the optimal rate estimate, we need to bound the Malliavin derivative of $M_j^t$. A straightforward computation for $D_r M_j^t$ and $D^2_{r,r'} M_j^t$ yields
\begin{align}\label{sec3-eq.DrM}
D_r M_j^t=\sum_{\ell=j+1}^k\left(Q_{j+1}^t\cdots Q_{\ell-1}^t D_rQ_{\ell}^tQ_{\ell+1}^t\cdots Q_{k}^t\right)
\end{align}
and
\begin{align}\label{sec3-eq.DrrM}
D^2_{r,r'} M_j^t&=\sum_{\ell=j+1}^k\left(Q_{j+1}^t\cdots Q_{\ell-1}^t D^2_{r,r'}Q_{\ell}^tQ_{\ell+1}^t\cdots Q_{k}^t\right)\nonumber\\
&\qquad+\sum_{\ell\neq\ell^{'}, \ell,\ell^{'}=j+1}^k\left(Q_{j+1}^t\cdots D_rQ_{\ell}^t\cdots D_{r'}Q_{\ell^{'}}^t\cdots Q_{k}^t\right).
\end{align}

By Lemma \ref{sec3-lem.4}, \eqref{sec3-eq.Q}, \eqref{sec3-eq.DrM} and \eqref{sec3-eq.DrrM}, we can see

\begin{align}\label{sec3-eq.supDrM}
\mathbb{E}\left[\sup_{t_0\leq r, t\leq T}|D_rM_j^t|^p\right]\leq C_{p,T}
\end{align}
and
\begin{align}\label{sec3-eq.supDrrM}
\mathbb{E}\left[\sup_{t_0\leq r,r', t\leq T}|D^2_{r,r'}M_j^t|^p\right]\leq C_{p,T}.
\end{align}

\textbf{Step 3:  A decomposition for the error process. } Recall the representation \eqref{sec3-eq.Z}, then we have
\begin{align}\label{sec3-eq.ZI}
|Z_t|^2\leq C_t\sum_{j,j'=0}^kM_j^tR_j^tM_{j'}^tR_{j'}^t=:C_t\sum_{j,j'=0}^kI_{j,j'},
\end{align}
where $C_t$  contains the maximum value $\sup_s e^{\mu[A](t_{j+1}\wedge t-s)}$ of the integrand function in the expression of $R_j^t$, and
\begin{align*}
I_{j,j'}= \int_{t_j}^{t_{j+1}\wedge t}\int_{t_{j'}}^{t_{j'+1}\wedge t}M_j^tf_1(j,s)\Delta U_{t_j,s}
\cdot M_{j'}^tf_1(j',s')\Delta U_{t_{j'},s'}dsds'.
\end{align*}

Since
\begin{align*}
\Delta U_{t_j,s}&=e^{A(s-t_0)}u_0+\int_{t_0}^se^{A(s-r)}f(U_r)dr+\int_{t_0}^se^{A(s-r)}b(r)dB_r^{H}\\
&\quad-e^{A(t_j-t_0)}u_0+\int_{t_0}^{t_j}e^{A(t_j-r)}f(U_r)dr+\int_{t_0}^{t_j}e^{A(t_j-r)}b(r)dB_r^{H}\\
&=\left(e^{A(s-t_0)}-e^{A(t_j-t_0)}\right)u_0+\int_{t_0}^{t_j}\left(e^{A(s-r)}-e^{A(t_j-r)}\right)f(U_r)dr+\int_{t_j}^se^{A(s-r)}f(U_r)dr\\
&\quad+\int_{t_0}^{t_j}\left(e^{A(s-r)}-e^{A(t_j-r)}\right)b(r)dB_r^{H}+\int_{t_j}^se^{A(s-r)}b(r)dB_r^{H}\\
&=:\Lambda^1_{t_j,s}+\Lambda^2_{t_j,s}+\Lambda^3_{t_j,s}+\Lambda^4_{t_j,s}+\Lambda^5_{t_j,s}.
\end{align*}
So we can write
\begin{align}\label{sec3-eq.IJJ}
I_{j,j'}&= \int_{t_j}^{t_{j+1}\wedge t}\int_{t_{j'}}^{t_{j'+1}\wedge t}M_j^tf_1(j,s)\cdot M_{j'}^tf_1(j',s')
\left(\sum_{\ell=1}^5\Lambda^1_{t_j,s}\right)\left(\sum_{\ell'=1}^5\Lambda^1_{t_{j'},s'}\right)dsds'.
\end{align}

\textbf{Step 3:  Estimate of $I_{j,j'}$. } Note that $\mathbb{E}\left|M_j^tf_1(j,s)\cdot M_{j'}^tf_1(j',s')\right|^p<\infty$, since \eqref{sec3-eq.Q}. By the proof of Theorem 3.1 in \cite{Kam24}, for any $\varepsilon>0$, we can find
\begin{align*}
\mathbb{E}\left|\Lambda^1_{t_j,s}+\Lambda^2_{t_j,s}+\Lambda^3_{t_j,s}\right|^p\leq c |s-t_j|^{p(1-\varepsilon)}
\end{align*}
holds under the condition $\mathbb{E}|Au_0|^p<\infty$ and assumption (A4).

This gives
\begin{align}\label{sec3-eq.13}
&\left|\mathbb{E}\left[M_j^tf_1(j,s)\cdot M_{j'}^tf_1(j',s')\Lambda^k_{t_j,s}\Lambda^{k'}_{t_{j'},s'}\right]\right|\nonumber\\
&\leq\left(\mathbb{E}\left|M_j^tf_1(j,s)\cdot M_{j'}^tf_1(j',s')\right|^3\right)^{1/3}
\left(\mathbb{E}\left|\Lambda^k_{t_j,s}\right|^3\right)^{1/3}
\left(\mathbb{E}\left|\Lambda^{k'}_{t_{j'},s'}\right|^3\right)^{1/3}\nonumber\\
&\leq c |s-t_j|^{1-\varepsilon}|s'-t_j'|^{1-\varepsilon}, ~~\text{for}~k,k'=1,2,3.
\end{align}

For $k,k'=4,5$, similar to \eqref{sec3-eq.supDrM} and \eqref{sec3-eq.supDrrM}, we can obtain
$$\mathbb{E}\left[\sup_{t_0\leq r, t\leq T}|D_{r}(M_j^tf_1(j,s)\cdot M_{j'}^tf_1(j',s'))|^p\right]\leq C_{p,T}$$
and
$$\mathbb{E}\left[\sup_{t_0\leq r,r', t\leq T}|D^2_{r,r'}(M_j^tf_1(j,s)\cdot M_{j'}^tf_1(j',s'))|^p\right]\leq C_{p,T}.$$
This satisfy the conditions in Lemma \ref{sec2-lem-3}. Therefore, we have the following conclusion
\begin{align}\label{sec3-eq.45}
\left|\mathbb{E}\left[M_j^tf_1(j,s)\cdot M_{j'}^tf_1(j',s')\Lambda^k_{t_j,s}\Lambda^{k'}_{t_{j'},s'}\right]\right|\leq c |s-t_j||s'-t_j'|+c\langle \mathbf{1}_{[t_j,s]}, \mathbf{1}_{[t_j',s']}\rangle_{\mathfrak{H}}.
\end{align}

If one of $k$ and $k'$ belongs to $\{1,2,3\}$ and the other belongs to $\{4,5\}$. Based on symmetry, we only need to consider the case of $k=1,2,3$ and $k'=4,5.$
By H\"{o}lder inequality, \eqref{sec3-eq.13} and \eqref{sec3-eq.45},
\begin{align}\label{sec3-eq.13-45}
&\left|\mathbb{E}\left[M_j^tf_1(j,s)\cdot M_{j'}^tf_1(j',s')\Lambda^k_{t_j,s}\Lambda^{k'}_{t_{j'},s'}\right]\right|\nonumber\\
&\leq\left(\mathbb{E}\left|M_j^tf_1(j,s)\Lambda^k_{t_j,s}\right|^2\right)^{1/2}
\left(\mathbb{E}\left|M_{j'}^tf_1(j',s')\Lambda^{k'}_{t_{j'},s'}\right|^2\right)^{1/2}\nonumber\\
&\leq c |s-t_j|^{1-\varepsilon}\left(|s'-t_j'|+|s'-t_j'|^H\right).
\end{align}

Together \eqref{sec3-eq.IJJ}--\eqref{sec3-eq.13-45}, we have
\begin{align}\label{sec3-eq.IJJ2}
\mathbb{E}|I_{j,j'}|\leq C h_j^{2-\varepsilon} h_{j'}^{2-\varepsilon}+C h_jh_{j'}\langle \mathbf{1}_{[t_j,t_{j+1}\wedge t]}, \mathbf{1}_{[t_j',t_{j'+1}\wedge t]}\rangle_{\mathfrak{H}}
+h_j^{2-\varepsilon}h_{j'}^{1+H},
\end{align}
where $h_j=t_{j+1}-t_j$.

\textbf{Step 4:  Conclusion. }
Plugging \eqref{sec3-eq.IJJ2} into \eqref{sec3-eq.ZI}, we conclude that
\begin{align*}
\sup_t\mathbb{E}|Z_t|^2&\leq C \max_jh_j \max_{j'}h_{j'}\left(\sum_{j,j'=0}^kh_j^{1-\varepsilon} h_{j'}^{1-\varepsilon}+ \sum_{j,j'=0}^kh_j^{H} h_{j'}^{H}+\sum_{j,j'=0}^kh_j^{1-\varepsilon} h_{j'}^{H}\right)\\
&\leq C h_{max}^2,
\end{align*}
where $h_{max}=\max_jh_j$, we use
\begin{align*}
\langle \mathbf{1}_{[t_j,t_{j+1}\wedge t]}, \mathbf{1}_{[t_j',t_{j'+1}\wedge t]}\rangle_{\mathfrak{H}}
&=\frac12[|t_{j+1}\wedge t-t_{j'}|^{2H}+|t_{j'+1}\wedge t-t_{j}|^{2H}\\
&\qquad\qquad-|t_{j'}-t_j|^{2H}-|t_{j+1}\wedge t-t_{j'+1}\wedge t|^{2H}]\\
&\leq c h_j^{H} h_{j'}^{H}, ~~~\text{see Lemma 2.4 in \cite{SXY}}
\end{align*}
in the first inequality and use $\sum_{j}h_j^{a}<\infty$ for $a<1$ in the last inequality.

Therefore, we obtain the convergence rate
$$
\sup_{k=0,1,\cdots,N}\sqrt{\mathbb{E}|U_{t_k}-V_{k}|^2}\leq C \max_k\,h_k.
$$
This completes the proof. $\hfill\blacksquare$

\section{Numerical experiment}\label{sec4}

We conclude this paper with a numerical example and verify our statement about the optimality of rate.  In \cite{Kam24}, Kamrani \emph{et al}. consider the following stiff nonlinear system
$$dU_t=(EU_t+\sin(U_t))dt+dB_t^H, ~t\in[0.1],$$
$$U_0=\sqrt{\frac2{m+1}}(\sin\frac{\pi}{m+1},\sin\frac{2\pi}{m+1},\cdots,\sin\frac{m\pi}{m+1})^{\top},$$
where $B^H$ is an $m$-dimensional fBm and $E\in\mathbb{R}^{m\times m}$ given by
\begin{equation*}
E=(m+1)^2
\left[
\begin{array}{cccccc}
-2 & 1 & 0 & 0 & \cdots &0\\
1 & -2 & 1 & 0 & \cdots &0\\
0 & \ddots & \ddots & \ddots & \ddots &\vdots\\
\vdots & \ddots & \ddots & \ddots & \ddots &\vdots\\
0 & \cdots & 0 & 1 & -2 &1\\
0 & \cdots & \cdots & 0 & 1 &-2
\end{array}
\right]
\end{equation*}
Their results show that the optimal rate of convergence of one, which is consistent with our proof.

Moreover, we can also provide a numerical example of a special system to further verify the optimal rate.  In the end of this paper we consider the following special system
$$\text{d}U_t=(-2U_t-\sin(U_t))\text{d}t+\text{d}B_t^{H,1}, U_0=1, t\in [0,1],$$
where $B^{H,1}$ is a one-dimensional fBm with Hurst parameter $1/2<H<1$.

To find the convergence rate of the Euler method, in our numerical computation we take the time step sizes $2^{-k}, k=1,2,3,4,5$.  As in the previous example the numerical solution with a much smaller step size (in this example we take the
step size $2^{-11}$) is used to represent the reference solution. We perform $N=1000$ simulations, and we compute the mean square error by
$$\varepsilon=\Big(\frac1N\sum_{j=1}^N(U_{t_j}-V_j)^2\Big)^{1/2},$$
where $U_{t_j}$ denotes by the exact solution, and $V_j$ denotes by the numerical solution at step size.

\begin{figure}[htbp]
\begin{minipage}{0.48\linewidth}
\vspace{3pt}
\centerline{\includegraphics[width=\textwidth]{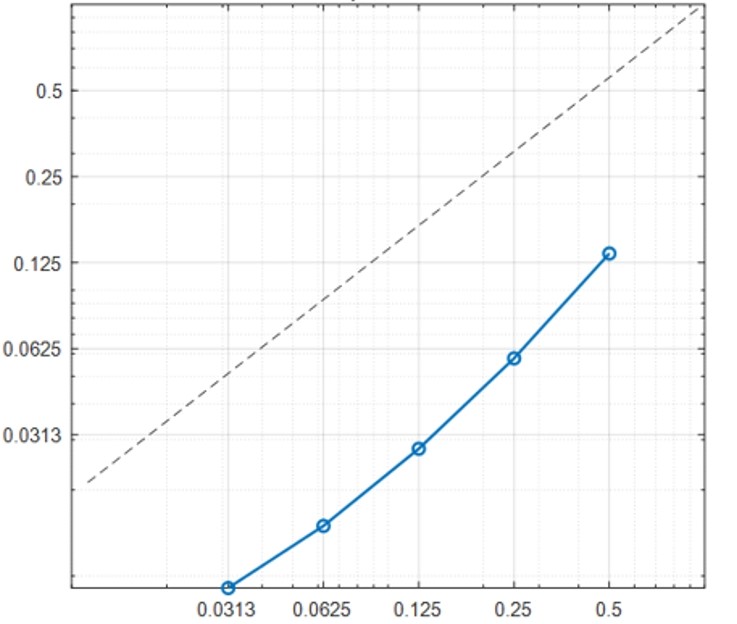}}
 \caption{$H=0.6$}
 \vspace{3pt}
\centerline{\includegraphics[width=\textwidth]{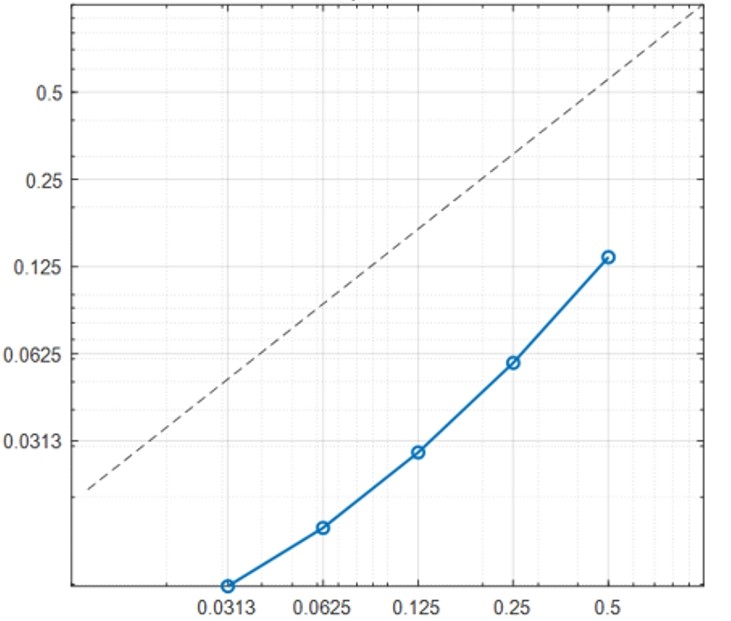}}
 \caption{$H=0.7$}
 \vspace{3pt}
  \end{minipage}
  \begin{minipage}{0.48\linewidth}
\vspace{3pt}
\centerline{\includegraphics[width=\textwidth]{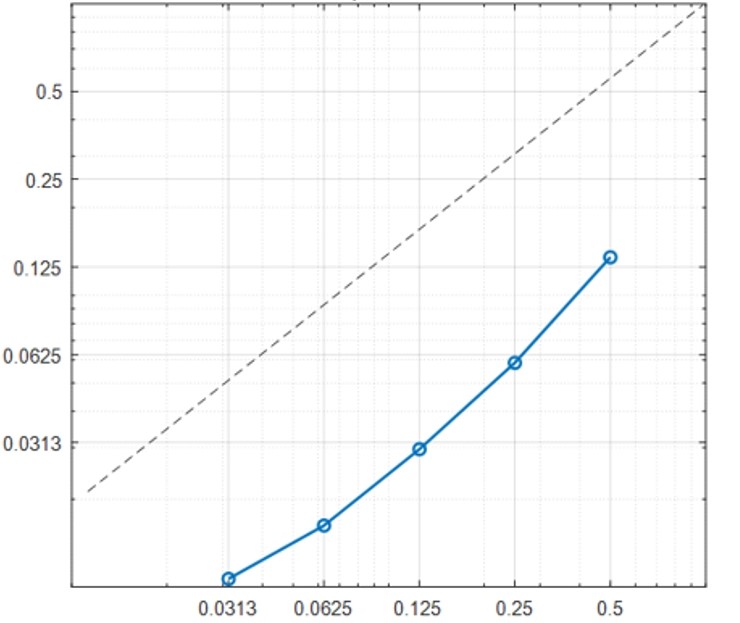}}
 \caption{$H=0.8$}
 \vspace{3pt}
\centerline{\includegraphics[width=\textwidth]{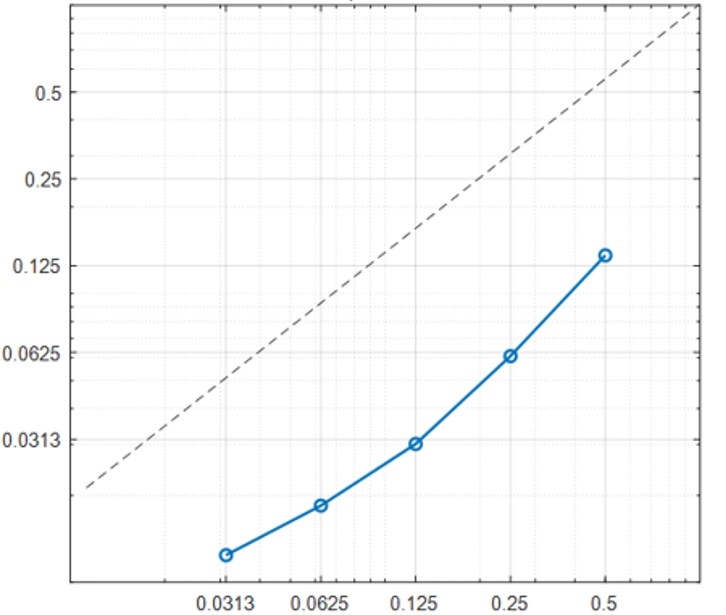}}
 \caption{$H=0.9$}
 \vspace{3pt}
  \end{minipage}
\end{figure}

Figures 1--4 show the root mean square errors obtained by fBm paths over $[0,1]$ for $H=0.6, 0.7, 0.8, 0.9$, respectively.
From these results, the orders of convergence of the method can be approximated as slopes of the regression lines in Figures for the different $H$-values. The results indicate an order of convergence of one.
Thus, we numerically verify that our theoretical results are in accordance with numerical results.

The optimal rate here gives us additional thinking.  We can consider the asymptotic error distribution of the exponential Euler scheme. For convenience, we consider the equidistant discretization of interval $[t_0,T]$, i.e.
$\pi: t_0<t_1<\cdots<t_N=T$ and $h_i=t_{i+1}-t_i=1/N, 0\leq i\leq N-1$. Then, under the representation of error process \eqref{sec3-eq.Z},
we can study the limit distribution of
\begin{align*}
NZ_{t}=\sum_{j=0}^kM_j^t(NR_{j}^t), \quad \text{as} ~N\to\infty.
\end{align*}
This requires further calculation of the precise convergence rate of $R_{j}^t$.
Note that
\begin{align*}
R_{j}^t&=\int_{t_j}^{t_{j+1}\wedge t}e^{A(t_{j+1}\wedge t-s)}f_1(j,s)\Delta U_{t_j,s}ds,
\end{align*}
where the expansion representation of $\Delta U_{t_j,s}$ will generate additional matrix $A$. The stiff term $|A|t\gg 1, t\in[t_0,T]$ will be the biggest trouble,
although the inequality $Ae^{A(t_{j+1}-s)}\leq c|t_{j+1}-s|^{-1}$ holds by assumption (A4).
Therefore, characterizing the limits of $NR_{j}^t, N\to\infty$ clearly and studying the asymptotic distribution of error process is our future aim.

\bigskip

\textbf{Data Availability} ~Data sharing not applicable to this article as no datasets were generated or analyzed during
the current study.

\textbf{Declaration of interests} ~The authors declare that they have no known competing financial interests or personal relationships that
could have appeared to influence the work reported in this paper.

%\textbf{Ethical Approval and Consent to participate} ~The manuscript is original and have not been published
%elsewhere in any form or language (partially or in fully).

\bigskip

\end{document}